\numberwithin{equation}{section} 
\newcounter{cont}[section] 
\newtheorem{thm}[cont]{Theorem}
\newtheorem{prop}[cont]{Proposition}
\newtheorem{lem}[cont]{Lemma}
\theoremstyle{definition}
 \theoremstyle{remark}
 \newtheorem{rem}[cont]{Remark}
\newcommand{\R}{\mathbb{R}}
\newcommand{\e}{\varepsilon}
\newcommand{\ut}{\tilde{u}}
\newcommand{\ub}{\bar{u}}
\begin{document}
\baselineskip=16pt

\title[Slow dynamics for hyperbolic Cahn--Hilliard equation]{Slow dynamics for the hyperbolic Cahn--Hilliard equation in one space dimension}

\author[R. Folino]{Raffaele Folino}
\address[Raffaele Folino]{Dipartimento di Ingegneria e Scienze dell'Informazione e Matematica, Universit\`a degli Studi dell'Aquila (Italy)}
\email{raffaele.folino@univaq.it}
	
\author[C. Lattanzio]{Corrado Lattanzio}
\address[Corrado Lattanzio]{Dipartimento di Ingegneria e Scienze dell'Informazione e Matematica, Universit\`a degli Studi dell'Aquila (Italy)}
\email{corrado@univaq.it} 
 
\author[C. Mascia]{Corrado Mascia}
\address[Corrado Mascia]{Dipartimento di Matematica,
Sapienza Universit\`a di Roma (Italy)}
\email{mascia@mat.uniroma1.it}

\keywords{Cahn--Hilliard equation; metastability; energy estimates}

\maketitle

\begin{abstract} 
The aim of this paper is to study the metastable properties of the solutions to a  hyperbolic relaxation of the classic Cahn--Hilliard equation in one space dimension,
subject to either Neumann or Dirichlet boundary conditions. 
To perform this goal, we make use of an ``energy approach", already proposed for various evolution PDEs, including the Allen--Cahn and the Cahn--Hilliard equations.
In particular, we shall prove that certain solutions  maintain  a {\it $N$-transition layer structure}  for a very long time, thus proving their metastable dynamics.
More precisely, we will show that, for an exponentially long time, such solutions are very close to piecewise constant functions assuming only the minimal points of the potential, 
with a finitely number of transition layers, which move with an exponentially small velocity. 
\end{abstract}

\section{Introduction}
\subsection{Hyperbolic relaxation of Cahn--Hilliard equation}
In this paper, we are interested in the metastable dynamics of the solutions to the \emph{hyperbolic Cahn--Hilliard equation}
\begin{equation}\label{CahnHill+ine}
	\tau u_{tt}+u_t=(-\e^2u_{xx}+W'(u))_{xx},
\end{equation}
where the function $W:\R\rightarrow\R$ is a double well potential with wells of equal depth;
precisely, we assume that $W\in C^3(\R)$ and 
\begin{equation}\label{eq:ass-W}
	W(\pm1)=W'(\pm1)=0, \qquad W''(\pm1)>0 \qquad \mbox{ and } \qquad W(u)>0, \quad \mbox{ for } \, u\neq\pm1. 
\end{equation}
Roughly speaking, $W$ is a smooth nonnegative function with global minimum $0$ reached only at two points, $\pm1$.
The simplest example is $W(u)=\frac14(u^2-1)^2$.

Equation \eqref{CahnHill+ine} is a \emph{hyperbolic variation} of the classic Cahn--Hilliard equation 
\begin{equation}\label{CahnHill}
	 u_t=(-\e^2u_{xx}+W'(u))_{xx},
\end{equation}
originally proposed in \cite{Cahn-Hilliard} to model phase separation in a binary system at a fixed temperature,
with constant total density and where $u$ stands for the concentration of one of the two components.
The hyperbolic version \eqref{CahnHill+ine} has been firstly proposed by Galenko in \cite{Galenko}, 
following the classical Maxwell--Cattaneo modification of the Fick's diffusion law \cite{Cat,JP89a,JP89b}; 
for further physical details about this model we also refer to \cite{GalenkoJou,LeZaGa}. 
In order to briefly describe such derivation, we recall the one-dimensional continuity equation for the concentration $u$:
\begin{equation}\label{eq:cont}
		u_t+J_x=0,
\end{equation}
where $J$ is its flux. 
In the case of the Cahn--Hilliard equation \eqref{CahnHill} the flux $J$ is related to the concentration $u$ according to the law
\begin{equation}\label{eq:flux}
	J=-(-\e^2u_{xx}+W'(u))_x.
\end{equation}
By substituting \eqref{eq:flux} in \eqref{eq:cont}, we obtain \eqref{CahnHill}.
On the other hand, in the case of the hyperbolic modification \eqref{CahnHill+ine}, 
the equilibrium between flux and derivative of the quantity $\e^2u_{xx}-W'(u)$ is asymptotical with a time-scale measured by the relaxation parameter $\tau>0$,
namely 
\begin{equation}\label{eq:flux-rel}
	\tau J_t+J=-(-\e^2u_{xx}+W'(u))_x.
\end{equation}
Multiplying by $\tau$ and differentiating equation \eqref{eq:cont} with respect to $t$, and differentiating equation \eqref{eq:flux-rel} with respect to $x$, 
we obtain the hyperbolic Cahn--Hilliard equation \eqref{CahnHill+ine}.
Notice that, taking (formally) the limit as $\tau\to0^+$, one obtains \eqref{eq:flux} from \eqref{eq:flux-rel} 
and the classic Cahn--Hilliard equation \eqref{CahnHill} from \eqref{CahnHill+ine}.

We consider equation \eqref{CahnHill+ine} in a bounded interval of the real line $I=[a,b]$, with initial conditions
\begin{equation}\label{initial}
	u(x,0)=u_0(x), \quad u_t(x,0)=u_1(x), \qquad x\in[a,b],
\end{equation}
and either homogeneous Neumann boundary conditions
\begin{equation}\label{boundary-Neu}
	u_x(a,t)=u_x(b,t)=u_{xxx}(a,t)=u_{xxx}(b,t)=0, \qquad \forall\,t\geq0,
\end{equation}
or Dirichlet boundary conditions 
\begin{equation}\label{boundary-Dir}
	u(a,t)=\pm1, \quad u(b,t)=\pm1 \qquad \mbox{ and } \qquad u_{xx}(a,t)=u_{xx}(b,t)=0, \qquad \forall\,t\geq0.
\end{equation}
It is worth to observe that, referring to the original Cahn--Hilliard equation 
\eqref{CahnHill}, homogeneous Neumann boundary conditions \eqref{boundary-Neu} are physically more relevant,
because they guarantee mass conservation and are equivalent to $u_x(a,t)=u_x(b,t)=0$, plus no flux condition at the boundary, $J(a,t)=J(b,t)=0$. 
For the hyperbolic model \eqref{CahnHill+ine} these properties are still valid, provided extra assumptions for initial data; 
in particular, concerning mass conservation, see Section \ref{sec:Neu} below. 

The problem of existence and uniqueness of solutions to the equation \eqref{CahnHill+ine} in a bounded interval of the real line
with different boundary conditions has been widely studied.
In particular, in the case of homogeneous Neumann boundary conditions \eqref{boundary-Neu}, 
we recall the results of \cite{Debussche}.
Let us introduce the unbounded linear operator
\begin{equation*}
	\mathcal A:=\frac{d^4}{dx^4}, \qquad D(\mathcal A):=\{u\in H^4(a,b) : u'(a)=u'(b)=u'''(a)=u'''(b)=0\}.
\end{equation*}
The operator $\mathcal A$ is positive, self-adjoint and possesses an orthonormal basis of eigenfunctions  $\{\omega_j\}_{j\in\mathbb{N}}$.
Denote by $\{\lambda_j\}_{j\in\mathbb{N}}$ the eigenvalues of $\mathcal A$ and recall that for any $s\in\R$,
the operator $\mathcal A^s$ is defined by 
\begin{equation*}
	\mathcal A^su:=\sum_{j=1}^{+\infty}\lambda^s_j u_j\omega_j, \qquad \mbox{ for } \, u:=\sum_{j=1}^{+\infty}u_j\omega_j.
\end{equation*}
Denoting by $V_s:=D(\mathcal A^{s/4})$, we have 
\begin{equation*}
	u\in V_s \qquad \mbox{ if } \quad \sum_{j=1}^{+\infty}\lambda_j^{s/2}u_j^2<+\infty.
\end{equation*}
We can say that for $s=1,2,3$ and for any $(u_0,u_1)\in V_s\times V_{s-2}$
the hyperbolic Cahn--Hilliard equation \eqref{CahnHill+ine} with initial data \eqref{initial} and boundary conditions \eqref{boundary-Neu}
possesses a unique solution $(u,u_t)\in C\left([0,T], V_s\times V_{s-2}\right)$, for any $T>0$; see \cite[Theorem 1.2]{Debussche}.
Similar results also hold true in the case of either periodic or homogeneous Dirichlet boundary conditions; see
\cite{Debussche,Gattietal05,ZhengMilani2005} and references therein.
However, the main aim of our investigation is related to metastable properties of solutions, thus we are not interested here in the complete discussion of 
the well--posedness of the aforementioned IBVPs for \eqref{CahnHill+ine}, and therefore in the sequel we shall refer to 
 solutions verifying for any $T>0$
\begin{equation*}
	(u,u_t)\in C\left([0,T], H^3(a,b)\times H^1(a,b)\right),
\end{equation*}
emanating from (sufficiently smooth) initial data.
Notice that, from the discussion above,  in the case of homogeneous Neumann boundary conditions \eqref{boundary-Neu}, 
this is guaranteed if $(u_0,u_1)\in D(\mathcal{A})\times V_1$ 
(because $D(\mathcal{A})\subset V_3$ and when $s$ is positive $V_s$ is a subspace of $H^s(a,b)$). 

Many papers have been devoted to the study of the long-time behavior of the solutions of \eqref{CahnHill+ine} 
and to the comparison of the asymptotic behavior of the solutions of \eqref{CahnHill+ine} with that of \eqref{CahnHill}.
Without claiming to be complete, we recall the following contributions. 
In \cite{Debussche}, the author considers the equation \eqref{CahnHill+ine} with either periodic
or homogeneous Neumann boundary conditions \eqref{boundary-Neu} and initial data \eqref{initial}.
Since it is impossible to find bounded absorbing sets in the whole space $V_s\times V_{s-2}$, $s=1,2,3$, (see the definitions above),
equation \eqref{CahnHill+ine} is restricted to a subspace of $V_s\times V_{s-2}$ and existence of bounded absorbing sets for the restriction is proved.
Moreover, the author shows that there exists a maximal attractor which is compact and connected with respect to the weak topology;
this attractor is close to the one of the Cahn--Hilliard equation \eqref{CahnHill} when $\tau$ is small, and 
the distance between the attractors converges to zero as $\tau\to0^+$.

These results are improved in \cite{ZhengMilani2004,ZhengMilani2005}, where the authors consider equation \eqref{CahnHill+ine} and its viscous version,
that is \eqref{CahnHill+ine} with the additional viscosity term $-\delta u_{txx}$, with homogeneous Dirichlet boundary conditions.
In \cite{ZhengMilani2005} the attention is focused on the existence of global attractors (with respect the strong topology), 
at least when $\tau$ is small, and on the upper semicontinuity of the attractors at $\tau=0$.
The goal of \cite{ZhengMilani2004} is the construction of exponential attractors (also known as inertial sets) and inertial manifolds for $\tau$ and $\delta$ fixed.
In \cite{Gattietal05} a detailed analysis of the longterm properties of the solutions in the singular limit $\tau\to0^+$ is provided
and the authors prove the existence of a regular global attractor for any $\tau\geq0$.
Besides, they construct a continuous (with respect to the limit $\tau\to0^+$) family of regular exponential attractors,
whose common domains of attraction coincide with the entire phase-space.
However, in \cite{Gattietal05} the convergence of the exponential attractors is obtained with respect to a norm that depends on $\tau$.
In \cite{BonGraMir}, the authors construct a family of exponential attractors which is continuous with respect to $\tau$ 
by using a metric that does not depend on $\tau$ as $\tau$ goes to zero.
The aforementioned bibliography is confined to one-dimensional model;
there is vast literature of works about equation \eqref{CahnHill+ine} in more that one spatial dimension
(among others, see \cite{Gattietal2,Grass-Pierr,GraSchZel,GraSchZel2,Segatti}).

In this paper, we want to study the metastable dynamics of the solutions of the hyperbolic Cahn--Hilliard equation \eqref{CahnHill+ine}
with both Neumann \eqref{boundary-Neu} and Dirichlet \eqref{boundary-Dir} boundary conditions.
Before presenting our results, we recall some well-known results on metastability 
for the Allen--Cahn equation and its hyperbolic variants, and for the classic Cahn--Hilliard equation \eqref{CahnHill}. 

\subsection{Metastability for evolution PDEs}
\emph{Metastability} is a broad term describing the persistence of unsteady structures for a very long time;
\emph{metastable dynamics} is characterized by evolution so slow that (non-stationary) solutions \emph{appear} to be stable.
In other words, the phenomenon of metastability appears when the time dependent solution of an evolution PDE, in a first phase,
evolves very slowly in time and after a very long time undergoes a drastic change.
We refer to this type of solutions as metastable states for the evolution PDE. 
The phenomenon of metastability was firstly observed in \cite{Bron-Kohn,Carr-Pego,Carr-Pego2,Fusco-Hale} for the \emph{Allen--Cahn equation} 
\begin{equation}\label{allencahn}
	u_t=\varepsilon^2 u_{xx}-W'(u),
\end{equation}
where $W$ is a potential satisfying \eqref{eq:ass-W}.
When equation \eqref{allencahn} is considered in a bounded interval of the real line with homogeneous Neumann boundary conditions, 
the solution converges to the constant solution $-1$ or $+1$ (the only stable stationary solutions), but in some cases the convergence is exceedingly slow.
If the solution at the time $t=0$ has $N$ transitions between $-1$ and $+1$ with an \emph{efficient} layered structure,
and the diffusion coefficient $\varepsilon$ is small, then the solution maintains these $N$ transitions for an exponentially long time,
i.e. for a time proportional to $e^{C/\varepsilon}$, for some $C>0$ (independent of $\e$).
A complete description of the dynamics of equation \eqref{allencahn} is performed in \cite{Chen}, 
where the author studies generation, persistence and annihilation of the metastable states.
If the initial datum $u_0$ is continuous, independent of $\varepsilon$, and changes sign finitely many times,
then there are four stage in the dynamics.
In the first stage, called \emph{phase separation}, the diffusion term $\varepsilon^2 u_{xx}$ can be ignored 
since $\varepsilon$ is small and $u_0$ is independent of $\varepsilon$;
thus, after this stage, the solution is well approximated by $+1$ or $-1$ except near a finite number of transition points $h_i$. 
In the second stage, \emph{generation of metastable patterns}, diffusion and reaction term compete and, after certain amount of time, balance each other.
After the second stage, we have the persistence of the metastable state for an exponentially long time. 
This stage lasts until the transition points are well-separated,
when they are close enough we have the annihilation of the interfaces, and after every annihilation, 
new metastable patterns with less transitions are developed. 
Finally, all the transitions are eliminated and the solution approaches a constant stable equilibrium.

The pioneering works \cite{Bron-Kohn,Carr-Pego,Carr-Pego2,Fusco-Hale} are devoted to the study in detail of the third stage,
the persistence of a metastable state.
This stage is studied using two different approaches. 
In \cite{Carr-Pego,Carr-Pego2,Fusco-Hale}, the authors use a dynamical approach and 
construct an ``approximately invariant''  $N$-dimensional manifold $\mathcal{M}$, consisting of functions which approximate metastable states with $N$ transition layers. 
If the initial datum is in a small neighborhood of $\mathcal{M}$, then the solution remains near $\mathcal{M}$ for a time proportional to $e^{C/\varepsilon}$. 
Therefore, for an exponentially long time the solution is a function with $N$ transitions between $-1$ and $+1$, 
and this transition structure persists until two transition points are close enough.
In addition, it is possible to determine a system of ordinary differential equations describing the motion of the transition layers,
and prove that the transition points move with exponentially small velocity.

On the other hand, in \cite{Bron-Kohn} the authors propose an energy approach,
based on the fact that the Allen--Cahn equation \eqref{allencahn} is a gradient flow in $L^2(a,b)$
for the Lyapunov functional
\begin{equation}\label{eq:F_eps}
	F_\e[u]=\int_a^b\left[\frac{\e^2}2u_x^2+W(u)\right]\,dx.
\end{equation}
More precisely, if $u^\e$ is a solution of the Allen--Cahn equation \eqref{allencahn} with either Neumann or Dirichlet boundary conditions,
then
\begin{equation}\label{eq:energy-var}
	F_\e[u^\e](0)-F_\e[u^\e](T)=\int_0^T\|u^\e_t(\cdot,t)\|^2_{{}_{L^2}}\,dt, \qquad \quad \forall \,T>0.
\end{equation}
The $\Gamma$-convergence properties of the functional $F_\e$ have been extensively studied 
(among others, see \cite{Modica,Sternberg}).
In particular, if we consider the normalized energy functional 
\begin{equation}\label{eq:P_eps}
	P_\e[u]:=\int_a^b\left[\frac{\e}2u_x^2+\frac{W(u)}{\e}\right]\,dx,
\end{equation}
then in one space dimension the minimum energy to have a transition between $-1$ and $+1$ is asymptotically
\begin{equation}\label{c_0}
	c_0:=\int_{-1}^{1}\sqrt{2W(s)}\,ds.
\end{equation}
The constant $c_0$ is positive and independent of $\e$, and so the normalized energy \eqref{eq:P_eps} is positive and finite as $\e\to0^+$.
Precisely, the results of \cite{Modica,Sternberg} assert that if the sequence $\{u^\e\}$ converges in $L^1(a,b)$ 
to a piecewise continuous function $v$ which has $N$ transitions between $-1$ and $+1$, then
\begin{equation}\label{eq:liminf}
	\liminf_{\e\to0} P_\e[u^\e]\geq Nc_0,
\end{equation}
with equality if the sequence $\{u^\e\}$ is properly chosen, as for  $\{u_0^\varepsilon(x)\}$ constructed  in Section \ref{sec:N-translayer}.
An improvement of \eqref{eq:liminf} permits to study the metastable dynamics of the solutions to \eqref{allencahn}.
Indeed, in \cite[Proposition 2.1]{Bron-Kohn} it has been proved that if $u\in H^1(a,b)$ is close enough in $L^1$ 
to a piecewise continuous function $v$ with $N$ transitions and satisfies $P_\e[u]\leq Nc_0+\e^k$, for a positive integer $k$, then
\begin{equation}\label{lower-BK}
	P_\e[u]\geq Nc_0-C\e^k.
\end{equation}
Using this lower bound on the energy, in \cite{Bron-Kohn} it is proved that the solutions of \eqref{allencahn} 
with either Neumann or Dirichlet boundary conditions and appropriate initial data maintain a \emph{transition layer structure} for a time $T_\e\geq \e^{-k}$ 
and the transition points move slower than $\e^k$.
Hence, in \cite{Bron-Kohn} the authors obtain \emph{algebraically slow motion}.

The lower bound \eqref{lower-BK} can be improved and extended to the case of vector-valued functions (cfr. \cite{Grant,Folino2}).
In particular, we have that if $u$ is close enough to $v$ in $L^1$, then
\begin{equation}\label{eq:lower-G}
	P_\e[u]\geq Nc_0-C\exp(-C/\e).
\end{equation}
Using the latter lower bound, one can prove persistence of metastable states for an exponentially long time.

The results on metastability for the Allen--Cahn equation \eqref{allencahn} can be extended to the \emph{hyperbolic Allen--Cahn equation} 
\begin{equation}\label{hyp-allencahn}
	\tau u_{tt}+g(u)u_t=\e^2u_{xx}-W'(u),
\end{equation}
where $g:\R\to\R$ is strictly positive (see \cite{Folino,FLM,Folino2}).
In particular, in \cite{FLM}, using the dynamical approach, the authors prove existence and persistence of metastable states
for an exponentially long time. 
On the other hand, the energy approach is applied to \eqref{hyp-allencahn} in \cite{Folino}.
In this case, the energy functional is
\begin{equation*}
	E_\e[u,w]=\frac{\tau}{2\e}\|w\|^2_{{}_{L^2}}+P_\e[u],
\end{equation*}
where $P_\e$ is defined in \eqref{eq:P_eps}.
If $u$ is a solution of \eqref{hyp-allencahn} with homogeneous Neumann boundary conditions, then
$E_\e[u,u_t]$ is a non-increasing function of $t$, namely
\begin{equation}\label{eq:energy-var-hyp}
	E_\e[u^\e,u^\e_t](0)-E_\e[u^\e,u^\e_t](T)=\e^{-1}\int_0^T\|u^\e_t(\cdot,t)\|^2_{{}_{L^2}}\,dt, \qquad \quad \forall \,T>0.
\end{equation}
In order to have slow motion of the solutions, the initial velocity $u_1=u_t(\cdot,0)$ has to be very small.
Precisely, in \cite{Folino} it is proved that if the initial profile $u_0$ has a \emph{transition layer structure of order $k$}
and the $L^2$--norm of the $u_1$ is bounded by $C\e^{\frac{k+1}2}$, then 
the solution maintains the same transition layer structure of its initial datum for a time $T_\e\geq \e^{-k}$.
These results are extended to the case of systems (when $u$ is a vector-valued function) in \cite{Folino2},
where, imposing stronger conditions on the initial data, persistence of metastable states for an exponentially long time is obtained.

The phenomenon of metastability is also present in the case of the Cahn--Hilliard equation.
Consider equation \eqref{CahnHill} in a bounded interval of the real line with homogeneous Neumann boundary conditions \eqref{boundary-Neu}; 
a straightforward computation shows that the total mass $\displaystyle\int_a^b u(x,t)\,dx$ is conserved.
Moreover, it is well-known that the solution converges as $t\to+\infty$ to a stationary solution, 
which minimizes the energy $F_\e$ \eqref{eq:F_eps} (cfr. \cite{Zheng1986}), 
and that the minimizers of $F_\e$ subject to the constraint $\displaystyle\int_a^b u\,dx=M$ are monotone functions (cfr. \cite{CarrGurtSlem}).
Therefore, the solutions converge to a limit which has at most a single transition.
However, as in the Allen--Cahn equation, if the initial profile has an \emph{efficient} $N$--layer structure, then the solution maintains
these $N$ transitions for a very long time.
Hence, we have an example of metastable dynamics.
The metastable properties of the solutions to \eqref{CahnHill}-\eqref{boundary-Neu} have been investigated in \cite{AlikBateFusc91,Bates-Xun1,Bates-Xun2} 
by using the dynamical approach, and the authors prove persistence of metastable states for an exponentially long time.
The energy approach performed in \cite{Bron-Hilh} permits to handle both Neumann \eqref{boundary-Neu} and Dirichlet \eqref{boundary-Dir} boundary conditions.
The energy functional $P_\e$ is the same of the Allen--Cahn equation (see \eqref{eq:P_eps}),
but in this case the variation of the energy is not related to the $L^2$--norm as in \eqref{eq:energy-var},
and we have to work with different spaces depending on the boundary conditions.
In \cite{Fife}, the Cahn--Hilliard equation \eqref{CahnHill} with \eqref{boundary-Neu} is derived as the gradient flow in the zero--mean subspace of the dual of $H^1(a,b)$,
while in the case of \eqref{boundary-Dir} the variation of the energy is related to the $H^{-1}$--norm of $u_t^\e$ (for details see \cite{Bron-Hilh}).
By using this approach and the lower bound \eqref{lower-BK}, in \cite{Bron-Hilh} \emph{algebraically slow motion} for the solutions of the Cahn--Hilliard equation is obtained.
These results are improved and extended to the case of Cahn--Morral systems (the vector-valued version of \eqref{CahnHill}) in \cite{Grant}.
 
The aim of this paper is to study the slow evolution of the solutions to the hyperbolic version \eqref{CahnHill+ine} by using the energy approach.
We use the same energy functional $E_\e[u,w]$ of the hyperbolic Allen--Cahn equation.
As we will see in Section \ref{sec:slow}, there is a difference between the boundary conditions \eqref{boundary-Neu} and \eqref{boundary-Dir}. 
Indeed, in order to ensure the energy decreases along solutions of \eqref{CahnHill+ine} with Neumann boundary conditions \eqref{boundary-Neu} 
(Lemma~\ref{lem:enNeu}), we still need mass conservation for the hyperbolic equation, which requires 
 that the initial velocity $u_1$ is of zero mean; see Section \ref{sec:Neu}.
After we know the energy $E_\e[u,w]$ is monotone decreasing in time, we need a lower bound for it that holds true for a larger class of functions: 
we will show that, if $u\in H^1(a,b)$ is such that a primitive of $u$ is close enough in $L^1$
to a primitive of a piecewise continuous function with $N$ jumps between $-1$ and $+1$, 
then the functional $P_\e[u]$ satisfies \eqref{eq:lower-G}; for details, see Proposition \ref{prop:lower} below. 
We will use these results to prove slow evolution of the solutions with boundary conditions \eqref{boundary-Neu} in Section \ref{sec:Neu} 
or \eqref{boundary-Dir} in Section \ref{sec:Dir} by showing that the solutions maintain $N$ transitions 
for a very long time $T_\e$ diverging to $+\infty$ as $\e\to 0^+$.
In addition, in Section \ref{sec:layer}, we shall study the dynamics of such transition points, 
proving in particular that their velocity is very small for $\e\ll 1$, being $O(T_\e^{-1})$.
Finally, Section \ref{sec:N-translayer} is devoted to the construction of an example of initial profile satisfying the needed assumptions.
We conclude the paper by presenting in Section \ref{sec:sys} the results in the case of systems, 
namely when $u$ is vector-valued and $W$ vanishes only in a finite number of points. 
 
\section{Slow motion}\label{sec:slow}
In this section we study the limiting behavior of the solutions to the hyperbolic Cahn--Hilliard equation \eqref{CahnHill+ine} as $\e\to0^+$ 
when the initial profile $u_0$ has a $N$--transition layer structure. 
In particular, using the energy approach we prove existence of metastable states for equation \eqref{CahnHill+ine} and 
show that the solution maintains the same $N$--transition layer structure of the initial profile $u_0$ for a very long time. 
The key tool to use the energy approach to study the slow evolution of the solutions to \eqref{CahnHill+ine} is the energy functional
\begin{equation}\label{energy}
	E_\varepsilon[u,w]:=\frac{\tau}{2\e}\|w\|^2_{{}_{L^2}}+P_\e[u], \qquad \quad \mbox{ where } \quad P_\e[u]:=\int_a^b\left[\frac{\e}{2}u_x^2+\frac{W(u)}{\e}\right]\,dx.
\end{equation} 
First of all, we consider the case of Dirichlet boundary conditions \eqref{boundary-Dir}.

\subsection{Dirichlet boundary conditions}\label{sec:Dir}
If $u$ is the solution of the initial boundary value problem \eqref{CahnHill+ine}-\eqref{initial}-\eqref{boundary-Dir}, then the function
\begin{equation}\label{u-Diri}
	\bar{u}(x,t):=\int_a^xu(y,t)\,dy-\frac1{b-a}\int_a^bdy\int_a^yu(\eta,t)\,d\eta
\end{equation}
satisfies the integrated version of \eqref{CahnHill+ine}
\begin{equation}\label{integrated}
	\tau \ub_{tt}+\bar{u}_t=-\e^2\ub_{xxxx}+W'(\bar{u}_x)_x,
\end{equation}
with Neumann boundary conditions
\begin{equation}\label{bound-ub}
	\ub_x(a,t)=\pm1, \quad \ub_x(b,t)=\pm1, \qquad \ub_{xxx}(a,t)=\ub_{xxx}(b,t)=0, \qquad \forall\,t\geq0,
\end{equation}
and initial data
\begin{equation*}
	\ub(x,0)=\ub_0(x), \quad \ub_t(x,0)=\ub_1(x), \qquad x\in[a,b].
\end{equation*}
Notice that for any function $u\in L^2(a,b)$, we can write $\ub=-y_x$, where $y$ is the solution to the elliptic problem
\begin{equation}\label{ell-dir}
	-y_{xx}=u \qquad \mbox{ in } \, (a,b), \qquad y(a)=y(b)=0.
\end{equation}
Then, the $L^2$--norm of $\ub$ can be seen as the $H^{-1}$--norm of $u$.

The first step to study the metastable dynamics is to show that $E_\e[u,\ub_t](t)$ is a non-increasing function of time $t$ 
along the solutions of \eqref{CahnHill+ine} with boundary conditions \eqref{boundary-Dir}.
\begin{lem}\label{lem:diss-Dir}
Let $(u,u_t)\in C([0,T],H^3(a,b)\times H^1(a,b))$ be solution of \eqref{CahnHill+ine}-\eqref{initial}-\eqref{boundary-Dir}.
If $E_\e$ is the functional defined in \eqref{energy}, then 
\begin{equation}\label{eq:energy-der}
	\frac{d}{dt}E_\e[u,\ub_t](t)=-\e^{-1}\|\ub_t(\cdot,t)\|^2_{{}_{L^2}},
\end{equation}
for any $t\in(0,T)$.
\end{lem}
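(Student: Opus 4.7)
My plan is to obtain the dissipation identity by testing the integrated equation \eqref{integrated} against $\ub_t$ and integrating over $(a,b)$, rather than differentiating $E_\e$ directly. The two inertial/friction terms on the left produce $\frac{\tau}{2}\tfrac{d}{dt}\|\ub_t\|^2_{L^2}$ and $\|\ub_t\|^2_{L^2}$, so the heart of the computation is to recognize that the right-hand side $-\e^2\ub_{xxxx}+W'(\ub_x)_x$, tested against $\ub_t$, produces exactly $-\e\,\tfrac{d}{dt}P_\e[u]$.

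To do this, I would set $\phi:=-\e^2u_{xx}+W'(u)$ so that, using $\ub_x=u$, the integrated equation reads $\tau\ub_{tt}+\ub_t=\phi_x$. Then
\[
\int_a^b\phi_x\,\ub_t\,dx=\bigl[\phi\,\ub_t\bigr]_a^b-\int_a^b\phi\,\ub_{tx}\,dx=-\int_a^b\phi\,u_t\,dx,
\]
where the boundary term vanishes thanks to the Dirichlet data \eqref{boundary-Dir}: indeed $u_{xx}(a,t)=u_{xx}(b,t)=0$ and $W'(u(a,t))=W'(\pm1)=0$, hence $\phi(a,t)=\phi(b,t)=0$. Next, expanding $-\int_a^b\phi\,u_t\,dx=\e^2\int_a^b u_{xx}u_t\,dx-\int_a^b W'(u)u_t\,dx$, the second integral equals $-\tfrac{d}{dt}\int_a^b W(u)\,dx$, while in the first I integrate by parts once more:
\[
\int_a^b u_{xx}u_t\,dx=\bigl[u_x u_t\bigr]_a^b-\int_a^b u_x u_{xt}\,dx=-\tfrac12\tfrac{d}{dt}\int_a^b u_x^2\,dx,
\]
since the Dirichlet condition $u(a,t)=u(b,t)=\pm1$ forces $u_t(a,t)=u_t(b,t)=0$. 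Collecting everything gives the right-hand side equal to $-\e\,\tfrac{d}{dt}P_\e[u]$.

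Putting the two sides together yields
\[
\frac{\tau}{2}\frac{d}{dt}\|\ub_t\|_{L^2}^2+\|\ub_t\|_{L^2}^2=-\e\,\frac{d}{dt}P_\e[u],
\]
and dividing by $\e$ and rearranging produces \eqref{eq:energy-der}. The only delicate point to verify is that the assumed regularity $(u,u_t)\in C([0,T];H^3(a,b)\times H^1(a,b))$ (so $u_{xx}$ admits a trace and $u_t$ is continuous up to the boundary) is sufficient to justify each integration by parts and the cancellation $\phi|_{\partial I}=0$; this is the main thing to watch, but no serious obstacle arises since in one dimension the embeddings give pointwise boundary values for all quantities involved.
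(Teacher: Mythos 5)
Your proposal is correct and is essentially the paper's own computation read in the opposite direction: the paper differentiates $E_\e[u,\ub_t]$ and then substitutes the integrated equation \eqref{integrated}, whereas you test \eqref{integrated} against $\ub_t$ and reassemble $-\e\,\tfrac{d}{dt}P_\e[u]$; the boundary cancellations you invoke ($u_{xx}=0$ and $W'(\pm1)=0$ at the endpoints, plus $u_t=0$ there since the Dirichlet data are time-independent) are exactly the facts the paper uses.
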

\begin{proof}
Direct differentiation and integration by parts give
\begin{equation*}
	\frac{d}{dt}E_\e[u,\ub_t]=\int_a^b\left[\frac\tau\e\ub_t\ub_{tt}-\e u_{xx}u_t+\frac{W'(u)u_t}{\e}\right]dx+(\e u_xu_t)\bigg|_a^b.
\end{equation*}
The last term of the latter equality is zero because $u$ satisfies the Dirichlet boundary conditions \eqref{boundary-Dir}.
Using that $\ub_x=u$ and integrating by parts, we obtain
\begin{equation*}
	\e\frac{d}{dt}E_\e[u,\ub_t]=\int_a^b\left[\tau\ub_t\ub_{tt}+\e^2 \ub_{xxxx}\ub_t-W'(\ub_x)_x\ub_t\right]dx-(\ub_{xxx}\ub_t)\bigg|_a^b+(W'(\ub_x)\ub_t)\bigg|_a^b.
\end{equation*}
In this equality the last two terms are zero for the boundary conditions \eqref{bound-ub}.
Since $\ub$ satisfies the equation \eqref{integrated}, we end up with 
\begin{equation*}
	\e\frac{d}{dt}E_\e[u,\ub_t]=-\int_a^b(\ub_t)^2dx,
\end{equation*}
and the proof is complete.
\end{proof}
Therefore, from \eqref{eq:energy-der} it follows that the variation of the energy is related to the $L^2$--norm of $\ub_t$.
This is in contrast with the case of the hyperbolic Allen--Cahn equation (see \eqref{eq:energy-var-hyp}), 
where the variation is related to the $L^2$--norm of the velocity $u_t$.

Let us now define a $N$--transition layer structure initial datum for our problem. 
To this end, we fix a {\it piecewise constant function} as follows:
\begin{equation}\label{vstruct}
	\begin{aligned}
	v:[a,b]\rightarrow\{-1,+1\}\  \hbox{with $N$ jumps located at $a<h_1<h_2<\cdots<h_N<b$ and}\ r>0\\
	\hbox{such that}\ (h_i-r,h_i+r)\cap(h_j-r,h_j+r)=\emptyset \  \hbox{for}\ i\neq j\ \hbox{and}\   a\leq h_1-r,\ h_N+r\leq b.
	\end{aligned}
\end{equation}
Then, we assume that the initial data depend on $\e$, with
\begin{align}
	&\lim_{\varepsilon\rightarrow 0} \|u_0^\varepsilon-v\|_{{}_{L^1}}=0, \label{ass-u0}\\
&\label{energy-ini}
	E_\varepsilon[u_0^\varepsilon, \ub_1^\varepsilon]\leq Nc_0+\frac{1}{f(\e)},
\end{align}
for any $\varepsilon\ll 1$, where $f:(0,+\infty)\rightarrow(0,+\infty)$ and the positive constant $c_ 0$ is defined by \eqref{c_0}.
The condition \eqref{ass-u0} fixes the number of transitions and their relative positions in the limit $\varepsilon\to0$. 
The condition \eqref{energy-ini} requires that the energy at the time $t=0$ exceeds at most of $1/f(\e)$ the minimum possible to have these $N$ transitions.
In order to obtain slow evolution for the solutions, the function $1/f(\e)$ has to be small for small $\e$;
then, we assume that $f(\e)\to+\infty$ as $\e\to0^+$.

The next step is to prove a lower bound on the energy. 
In particular, since the variation of energy is related to the $L^2$--norm of $\ub_t$, we need to prove a lower bound on $P_\varepsilon[u]$  
under the assumption that $\ub$ is close to $\bar{v}$ in $L^1$.
\begin{prop}\label{prop:lower}
Assume that $W\in C^3(\R)$ satisfies \eqref{eq:ass-W} and define $\lambda:=\min\{W''(\pm1)\}$. 
Let $v$ as in \eqref{vstruct} and let $A\in(0,r\sqrt{2\lambda})$.
Then, there exist constants $C,\delta>0$ (depending only on $W,v$ and $A$) such that if $u\in H^1$ satisfies 
\begin{equation}\label{ass-ub}
	\|{\ub-\bar{v}}\|_{{}_{L^1}}\leq\delta,
\end{equation}
then
\begin{equation}\label{lower}
	P_\varepsilon[u]\geq Nc_0-C\exp(-A/\varepsilon).
\end{equation}
\end{prop}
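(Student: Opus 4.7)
The plan is to adapt the energy-localization strategy originally developed in \cite{Bron-Kohn} and sharpened to exponential precision in \cite{Grant,Folino2}. Throughout, set $\phi(s):=\int_{-1}^{s}\sqrt{2W(t)}\,dt$; since $W\in C^3$ with $W(\pm 1)=W'(\pm 1)=0$, one has $\phi(-1)=0$, $\phi(+1)=c_0$, and the quadratic estimate $|\phi(s)-\phi(\pm 1)|\le C|s\mp 1|^2$ for $s$ near $\pm 1$.

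First I would dispose of the trivial case $P_\e[u]\ge Nc_0$. Otherwise $P_\e[u]<Nc_0$ provides an \emph{a priori} upper bound: Chebyshev applied to $\int_a^b W(u)/\e\,dx\le Nc_0$ shows that $u$ lies in any prescribed fixed neighborhood of $\{-1,+1\}$ outside a set of measure $O(\e)$.

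Next I would exploit the hypothesis to locate, for each $h_i$, two \emph{good endpoints} $\alpha_i\in(h_i-r,\,h_i-r+\rho)$ and $\beta_i\in(h_i+r-\rho,\,h_i+r)$, with $\rho>0$ small and independent of $\e$, at which $u$ is pointwise close to the corresponding value in $\{\pm 1\}$. The key observation is that on each region where $v$ is constant, $\bar{v}$ is affine with slope $\pm 1$, so the bound $\|\ub-\bar{v}\|_{{}_{L^1}}\le\delta$ forces the mean of $u=\ub_x$ over any sub-interval of length $\rho$ there to equal $\pm 1+O(\delta/\rho)$; combined with the $O(\e)$-smallness of the transition set from Step~1, a pigeonhole argument yields pointwise-good $\alpha_i,\beta_i$. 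The pointwise Modica--Mortola inequality $\tfrac{\e}{2}u_x^2+W(u)/\e\ge|u_x|\sqrt{2W(u)}=|(\phi\circ u)_x|$, integrated on each $[\alpha_i,\beta_i]$, summed in $i$, and combined with nonnegativity of the integrand on the complement, gives
\begin{equation*}
P_\e[u]\;\ge\;\sum_{i=1}^{N}\bigl|\phi(u(\beta_i))-\phi(u(\alpha_i))\bigr|\;\ge\;Nc_0-C\sum_{i=1}^{N}\Bigl(|u(\alpha_i)\mp 1|^2+|u(\beta_i)\mp 1|^2\Bigr).
\end{equation*}

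The main obstacle, and the technical core of the argument, is upgrading the boundary deviations $|u(\alpha_i)\mp 1|$ and $|u(\beta_i)\mp 1|$ to be \emph{exponentially} small in $\e$. The natural rate arises from the linearization near $\pm 1$, $\e^2 w_{xx}=W''(\pm 1)w=\lambda w$, whose stable mode decays like $\exp(-x\sqrt{\lambda}/\e)$. Concretely, I would compare $u$ on a sub-interval of length $\gtrsim r$ lying in a flat region of $v$ to the minimizer of $\int[\tfrac{\e}{2}w_x^2+W(w)/\e]\,dx$ with boundary values equal to $u(\alpha_i)$ at one end and to a point close to $\pm 1$ (inside the adjacent transition zone) at the other. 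Using the lower bound $W(s)\ge\tfrac{\lambda}{2}(1-\sigma)(s\mp 1)^2$ valid in a fixed neighborhood of $\pm 1$ (for any $\sigma\in(0,1)$), this one-dimensional problem is essentially the linear one above, yielding $|u(\alpha_i)\mp 1|\le C\exp(-A/(2\e))$ for any $A<r\sqrt{2\lambda}$, the extra factor $\tfrac{1}{2}$ being absorbed by the square in the previous display. The restriction $A<r\sqrt{2\lambda}$ thus accommodates both the linearization error and the various geometric constants, and choosing $\delta$ small enough depending on $W$, $v$, and $A$ so that all thresholds in the above steps are met completes the proof of \eqref{lower}.
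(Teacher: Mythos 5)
Your overall architecture (locate good points from the $L^1$ hypothesis on $\ub$, apply the pointwise Modica--Mortola inequality between them, then control the loss at the endpoints) matches the paper's, and your first steps are sound: the paper finds the good points by a duality argument with test functions ($|\int_a^b(u-v)w\,dx|\le\|w'\|_\infty\|\ub-\bar v\|_{L^1}$ plus a sign argument), which is essentially equivalent to your averaging/pigeonhole route. The genuine gap is in the step you yourself flag as the technical core. You claim that, after comparing $u$ on a flat sub-interval with the minimizer of the energy \emph{whose boundary value at one end is $u(\alpha_i)$}, you obtain $|u(\alpha_i)\mp1|\le C\exp(-A/(2\e))$. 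This cannot work as stated: the comparison fixes $u(\alpha_i)$ as a Dirichlet datum, so it produces a lower bound on the energy of $u$ \emph{in terms of} $u(\alpha_i)$, never an upper bound on $|u(\alpha_i)\mp1|$ itself. Moreover, the desired pointwise bound is not a consequence of the hypotheses: the proposition assumes only $\|\ub-\bar v\|_{L^1}\le\delta$, and even after discarding the trivial case $P_\e[u]\ge Nc_0$, the remaining configurations allow $u$ to deviate from $\pm1$ by non-exponentially-small amounts at any prescribed point (a dip of size $d$ over a width $O(\e)$ costs only $O(d^2)$ of additional energy, which is compatible with $P_\e[u]<Nc_0$ when the transition is correspondingly incomplete at its endpoints). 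So the chain $P_\e[u]\ge Nc_0-C\sum\bigl(|u(\alpha_i)\mp1|^2+|u(\beta_i)\mp1|^2\bigr)$ cannot be closed by making the endpoint deviations of $u$ exponentially small.

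The paper circumvents this by never estimating $u$ at the good points beyond the fixed tolerance $\rho_2$. Instead it bounds from below the energy of $u$ on the \emph{tail} intervals $[h_i+r_+,h_i+r]$ (between the good point and the edge of the transition window) by $\int_{u(h_i+r_+)}^{1}\sqrt{2W(s)}\,ds-\tfrac{C}{2N}\exp(-A/\e)$, i.e., the tails recover, up to an exponentially small error, \emph{exactly} the amount lost in the Modica--Mortola estimate, whatever $|u(h_i+r_+)-1|\le\rho_2$ happens to be. This is achieved by comparing with the constrained minimizer $z$ that matches $u$ at the near end and is \emph{free} at the far end: the Euler--Lagrange equation, a Taylor expansion of $W'$, and a maximum-principle comparison with a $\cosh$ profile show that $z$ at the \emph{far} end is exponentially close to $1$, whence the energy of $z$ (hence of $u$) on the tail dominates $\int_{u(h_i+r_+)}^{1}\sqrt{2W(s)}\,ds$ up to $C\exp(-A/\e)$. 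The exponential smallness is a property of the minimizer's far endpoint, not of $u$'s near endpoint; this is the idea missing from your proposal. (A secondary issue: you place your good points at the outer edges of the transition windows and run the comparison in the adjacent flat region of $v$, but \eqref{vstruct} allows consecutive windows to touch, so that flat region may be empty; the paper keeps the whole construction inside $[h_i-r,h_i+r]$, taking $r_\pm<\hat r<r$ so that each tail has length at least $r-\hat r$, which is where the admissible range $A<r\sqrt{2\lambda}$ comes from.)
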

\begin{proof}
Take $\hat r\in(0,r)$ and $\rho_1$ so small that $A\leq(r-\hat r)\sqrt{2\lambda-\nu\rho_1}$,
where $\nu:=\sup|W'''(x)|$ for $x\in[-1-\rho_1,1+\rho_1]$.
Then, choose $0<\rho_2 < \rho_1$  sufficiently small that
\begin{equation}\label{eq:forrho2}
\begin{aligned}
	\int_{1-\rho_1}^{1-\rho_2}\sqrt{2W(s)}\,ds&>\int_{1-\rho_2}^{1}\sqrt{2W(s)}\,ds,  \\
	\int_{-1+\rho_2}^{-1+\rho_1}\sqrt{2W(s)}\,ds&> \int_{-1}^{-1+\rho_2}\sqrt{2W(s)}\,ds.
	\end{aligned}
\end{equation}
Now, let us focus our attention on $h_i$, one of the transition points of $v$ and, to fix ideas, 
 let $v(h_i+r)= 1$ and $v(h_i-r)=-1$, the other case being analogous.
We claim that there exist $r_+$ and $r_-$ in $(0,\hat r)$ such that
\begin{equation}\label{2points}
	|u(h_i+r_+)-1|<\rho_2, \qquad \quad \mbox{ and } \qquad \quad |u(h_i-r_-)+1|<\rho_2.
\end{equation}
Indeed, we have
\begin{equation*}
	-\int_a^b (\ub-\bar{v})w'\,dx=\int_a^b(u-v)w\,dx,
\end{equation*}
for any test function $w\in C^1_c([a,b])$.
It follows that
\begin{equation*}
	\left|\int_a^b(u-v)w\,dx\right|\leq\|w'\|_{\infty}\int_a^b |\ub-\bar{v}|\,dx\leq\delta\|w'\|_{\infty},
\end{equation*} 
for any test function $w\in C^1_c([a,b])$.

Assume by contradiction that $|u-1|\geq\rho_2$ throughout $(h_i,h_i+\hat r)$. 
Being in particular $u - v$ continuous in $(h_i,h_i+\hat r)$, then  either $u-1\geq\rho_2>0$ or $u-1\leq-\rho_2<0$ in the whole interval under consideration. 
Therefore, choosing $w$ non negative with compact support contained in $(h_i,h_i+\hat r)$, we obtain
\begin{equation*}
	\rho_2\|w\|_{{}_{L^1}} \leq \int_{h_i}^{h_i+\hat r}|u-1|w\,dx=\left|\int_a^b(u-v)w\,dx\right|  \leq\delta\|w'\|_{\infty}.
\end{equation*}
This leads to a contradiction if $\delta$ is sufficiently small.
Similarly, one can prove the existence of $r_-\in(0,\hat r)$ such that $|u(h_i-r_-)+1|<\rho_2$.

Consider the interval $[h_i-r,h_i+r]$. 
Observe that from Young's inequality, it follows that for any $a\leq c<d\leq b$
\begin{equation}\label{eq:Young}
	\int_c^d\left[\frac{\e}{2}u_x^2+\frac{W(u)}{\e}\right]\,dx \geq \left|\int_{u(c)}^{u(d)}\sqrt{2W(s)}\,ds\right|.
\end{equation}
If $u(h_i+r_+)\geq1$ and $u(h_i-r_-)\leq-1$, then from \eqref{eq:Young}  we can conclude that
\begin{equation*}
	P_\e[u] \geq \int_{h_i-r_-}^{h_i+r_+}\left[\frac{\e}{2}u_x^2+\frac{W(u)}{\e}\right]\,dx \geq\int_{-1}^{1}\sqrt{2W(s)}\,ds=c_0
\end{equation*}
which leads to \eqref{lower} summing up all terms due to each transition point. 

On the other hand, notice that in general we have
\begin{align}
	\int_{h_i-r}^{h_i+r}\left[\frac{\e}{2}u_x^2+\frac{W(u)}{\e}\right]\,dx & 
	\geq \int_{h_i+r_+}^{h_i+r}\left[\frac{\e}{2}u_x^2+\frac{W(u)}{\e}\right]\,dx + \int_{h_i-r}^{h_i-r_-}\left[\frac{\e}{2}u_x^2+\frac{W(u)}{\e}\right]\,dx \notag \\
	& \quad +\int_{-1}^{1}\sqrt{2W(s)}\,ds-\int_{-1}^{u(h_i-r_-)}\sqrt{2W(s)}\,ds \notag \\
	& \quad-\int_{u(h_i+r_+)}^{1}\sqrt{2W(s)}\,ds. \label{eq:Pe}
\end{align}
To estimate the first term on the right hand side of \eqref{eq:Pe}, let us focus the attention on the interval $[h_i+r_+,h_i+r]$. 
Assume that $1-\rho_2<u(h_i+r_+)<1$ and consider the unique minimizer $z:[h_i+r_+,h_i+r]\rightarrow\R$ 
of the functional $$\int_{h_i+r_+}^{h_i+r}\left[\frac{\e}{2}u_x^2+\frac{W(u)}{\e}\right]\,dx$$ subject to the boundary condition 
$z(h_i+r_+)=u(h_i+r_+)$.
If the range of $z$ is not contained in the interval $(1-\rho_1,1+\rho_1)$, then from \eqref{eq:Young}, it follows that
\begin{equation}\label{E>fi}
	\int_{h_i+r_+}^{h_i+r}\left[\frac{\e}{2}z_x^2+\frac{W(z)}{\e}\right]\,dx>\int_{u(h_i+r_+)}^{1}\sqrt{2W(s)}\,ds.
\end{equation}
by the choice of $r_+$ and $\rho_2$. 
Suppose, on the other hand, that the range of $z$ is contained in the interval $(1-\rho_1,1+\rho_1)$. 
Then, the Euler--Lagrange equation for $z$ is
\begin{align*}
	z''(x)=\varepsilon^{-2}W'(z(x)), \quad \qquad x\in(h_i+r_+,h_i+r),\\
	z(h_i+r_+)=u(h_i+r_+), \quad \qquad z'(h_i+r)=0.
\end{align*}
Denoting by $\psi(x):=(z(x)-1)^2$, we have $\psi'=2(z-1)z'$ and 
\begin{equation*}
	\psi''(x)=2(z(x)-1)z''(x)+2z'(x)^2\geq\frac2{\varepsilon^2}(z(x)-1)W'(z(x)).
\end{equation*}
Since $|z(x)-1|\leq\rho_1$ for any $x\in[h_i+r_+,h_i+r]$, using Taylor's expansion 
\begin{equation*}
	W'(z(x))=W'(1)+W''(1)(z(x)-1)+R=W''(1)(z(x)-1)+R,
\end{equation*}
where $|R|\leq\nu|z-1|^2/2$, we obtain
\begin{align*}
	\psi''(x) & \geq \frac{2}{\varepsilon^2}W''(1)(z(x)-1)^2-\frac{\nu}{\varepsilon^2}|z(x)-1|^3\\
	& \geq \frac{2\lambda}{\varepsilon^2}(z(x)-1)^2-\frac{\nu\rho_1}{\varepsilon^2}(z(x)-1)^2\\
	& \geq \frac{\mu^2}{\varepsilon^2}\psi(x),
\end{align*}
where $\mu=A/(r-\hat r)$. 
Thus, $\psi$ satisfies
\begin{align*}
	\psi''(x)-\frac{\mu^2}{\varepsilon^2}\psi(x)\geq0, \quad \qquad x\in(h_i+r_+,h_i+r),\\
	\psi(h_i+r_+)=(u(h_i+r_+)-1)^2, \quad \qquad \psi'(h_i+r)=0.
\end{align*}
We compare $\psi$ with the solution $\hat \psi$ of
\begin{align*}
	\hat\psi''(x)-\frac{\mu^2}{\varepsilon^2}\hat\psi(x)=0, \quad \qquad x\in(h_i+r_+,h_i+r),\\
	\hat\psi(h_i+r_+)=(u(h_i+r_+)-1)^2, \quad \qquad \hat\psi'(h_i+r)=0,
\end{align*}
which can be explicitly calculated to be
\begin{equation*}
	\hat\psi(x)=\frac{(u(h_i+r_+)-1)^2}{\cosh\left[\frac\mu\varepsilon(r-r_+)\right]}\cosh\left[\frac\mu\varepsilon(x-(h_i+r))\right].
\end{equation*}
By the maximum principle, $\psi(x)\leq\hat\psi(x)$ so, in particular,
\begin{equation*}
	\psi(h_i+r)\leq\frac{(u(h_i+r_+)-1)^2}{\cosh\left[\frac\mu\varepsilon(r-r_+)\right]}\leq2\exp(-A/\varepsilon)(u(h_i+r_+)-1)^2.
\end{equation*}
Then, we have 
\begin{equation}\label{|z-v+|<exp}
	|z(h_i+r)-1|\leq\sqrt2\exp(-A/2\varepsilon)\rho_2.
\end{equation}
Now, by using Taylor's expansion for $W(s)$, we obtain
\begin{align*}
	W(s) & =W(1)+W'(1)(s-1) +\tfrac12W''(1)(s-1)^2+o(|s-1|^2)\\
	& \leq(s-1)^2\left(\frac{W''(1)}2+\frac{o(|s-1|^2)}{|s-1|^2}\right).
\end{align*}
Therefore, for $s$ sufficiently close to $1$ we have
\begin{equation}\label{W-quadratic}
	W(s)\leq\Lambda(s-1)^2.
\end{equation}
Using \eqref{|z-v+|<exp} and \eqref{W-quadratic}, we obtain
\begin{equation}\label{fi<exp}
	\left|\int_{z(h_i+r)}^{1}\sqrt{2W(s)}\,ds\right|\leq\sqrt{\Lambda/2}(z(h_i+r)-1)^2\leq\sqrt{2\Lambda}\,\rho_2^2\,\exp(-A/\varepsilon). 
\end{equation}
From \eqref{fi<exp} it follows that, for some constant $C>0$, 
\begin{align}
	\int_{h_i+r_+}^{h_i+r}\left[\frac{\e}{2}z_x^2+\frac{W(z)}{\e}\right]\,dx & \geq\left|\int_{z(h_i+r_+)}^{z(h_i+r)}\sqrt{2W(s)}\,ds\right| \nonumber\\
	& \geq \left|\int_{z(h_i+r_+)}^{1}\sqrt{2W(s)}\,ds-\int_{z(h_i+r)}^{1}\sqrt{2W(s)}\,ds\right| \nonumber\\
	& \geq \int_{u(h_i+r_+)}^{1}\sqrt{2W(s)}\,ds-\tfrac{C}{2N}\exp(-A/\varepsilon). \label{E>fi-exp}
\end{align}
Combining \eqref{E>fi} and \eqref{E>fi-exp}, we get that the constrained minimizer $z$ of the proposed variational problem satisfies
\begin{equation}\label{new1}	
	\int_{h_i+r_+}^{h_i+r}\left[\frac{\e}{2}z_x^2+\frac{W(z)}{\e}\right]\,dx\geq\int_{u(h_i+r_+)}^{1}\sqrt{2W(s)}\,ds-\tfrac{C}{2N}\exp(-A/\varepsilon).
\end{equation}
The restriction of $u$ to $[h_i+r_+,h_i+r]$ is an admissible function, so it must satisfy the same estimate
\begin{equation*}
	\begin{aligned}
	\int_{h_i+r_+}^{h_i+r}\left[\frac{\e}{2}u_x^2+\frac{W(u)}{\e}\right]\,dx&\geq \int_{h_i+r_+}^{h_i+r}\left[\frac{\e}{2}z_x^2+\frac{W(z)}{\e}\right]\,dx\\
	&\geq\int_{u(h_i+r_+)}^{1}\sqrt{2W(s)}\,ds-\tfrac{C}{2N}\exp(-A/\varepsilon).
	\end{aligned}
\end{equation*}

The second term on the right hand side of \eqref{eq:Pe} is estimated similarly by analyzing  the interval $[h_i-r,h_i-r_-]$ 
and using the second condition of \eqref{eq:forrho2} to obtain the corresponding inequality \eqref{E>fi}.
The obtained lower bound reads:
\begin{equation}\label{new2}	
	\int_{h_i-r}^{h_i-r_-}\left[\frac{\e}{2}z_x^2+\frac{W(z)}{\e}\right]\,dx\geq\int_{-1}^{u(h_i-r_-)}\sqrt{2W(s)}\,ds-\tfrac{C}{2N}\exp(-A/\varepsilon).
\end{equation}
Finally, by substituting \eqref{new1} and \eqref{new2} in \eqref{eq:Pe}, we deduce
\begin{equation*}
	\int_{h_i-r}^{h_i+r}\left[\frac{\e}{2}u_x^2+\frac{W(u)}{\e}\right]\,dx 
	 \geq c_0-\tfrac{C}N\exp(-A/\varepsilon).
\end{equation*}
Summing up all of these estimates for $i=1, \dots, N$, namely for all transition points, we end up with
\begin{equation*}
	P_\varepsilon[u]\geq\sum_{i=1}^N\int_{h_i-r}^{h_i+r}\left[\frac{\e}{2}u_x^2+\frac{W(u)}{\e}\right]\,dx\geq Nc_0-C\exp(-A/\varepsilon),
\end{equation*}
and the proof is complete.
\end{proof}

\begin{rem}\label{rem:ut}
As already underlined  in the previous proof, the assumption \eqref{ass-ub} of Proposition \ref{prop:lower} enables us to 
prove the existence of two points such that \eqref{2points} holds, and, once we have obtained this property,
the arguments used are the same of previous cases \cite{Grant,Folino2}.

It is worth to observe that for any $g\in L^p$
\begin{equation}\label{eq:gbarandg}
	\|\bar g\|_{{}_{L^p}}\leq2(b-a)\|g\|_{{}_{L^p}},
\end{equation}
and therefore \eqref{ass-ub} is indeed satisfied provided $u$ and $v$ are sufficiently close in $L^1$. 
Hence, from \eqref{eq:gbarandg} applied to $u_0^\varepsilon-v$ and from 
 \eqref{ass-u0}, we can apply Proposition \ref{prop:lower} and conclude \eqref{lower} for   the initial profile $u_0^\e$.
Finally, using also \eqref{energy-ini} we end up with
\begin{equation*}
	\tau\|\ub_1^\varepsilon\|_{{}_{L^2}}^2\leq C\e\left[\frac{1}{f(\e)}+\exp(-A/\varepsilon)\right].
\end{equation*}
\end{rem}

Proposition \ref{prop:lower} permits also to prove the following result.
\begin{prop}\label{prop:L2-norm}
Assume that $W\in C^3(\R)$ satisfies \eqref{eq:ass-W}.
Let $u^\varepsilon$ be the solution of \eqref{CahnHill+ine}-\eqref{initial}-\eqref{boundary-Dir} 
with initial data $u_0^{\varepsilon}$, $u_1^{\varepsilon}$ satisfying \eqref{ass-u0} and \eqref{energy-ini}.
Then, there exist positive constants $\varepsilon_0, C_1, C_2>0$ (independent on $\varepsilon$) such that
\begin{equation}\label{L2-norm}
	\int_0^{C_1\varepsilon^{-1}T_\e}\|\ub_t^\varepsilon\|^2_{{}_{L^2}}dt\leq C_2\varepsilon \left[\frac{1}{f(\e)}+\exp(-A/\varepsilon)\right],
\end{equation}
for all $\varepsilon\in(0,\varepsilon_0)$, where $T_\varepsilon:=\min\{f(\e),\exp(A/\varepsilon)\}$.
\end{prop}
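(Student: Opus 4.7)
The plan is to combine the energy dissipation identity from Lemma~\ref{lem:diss-Dir} with the lower bound of Proposition~\ref{prop:lower}, closing the argument by a continuation/bootstrap in time to guarantee that the hypothesis \eqref{ass-ub} remains valid along the solution throughout the interval $[0, C_1 \varepsilon^{-1} T_\varepsilon]$.

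First, I would introduce the maximal time
\begin{equation*}
\hat T_\varepsilon := \sup\left\{ T>0 \,:\, \|\bar u^\varepsilon(\cdot,t) - \bar v\|_{L^1} \leq \delta \ \text{ for all } t \in [0,T] \right\},
\end{equation*}
where $\delta$ is the constant provided by Proposition~\ref{prop:lower}. Thanks to Remark~\ref{rem:ut}, the assumptions \eqref{ass-u0} and \eqref{eq:gbarandg} ensure $\hat T_\varepsilon > 0$ for $\varepsilon$ small. On $[0,\hat T_\varepsilon]$, Proposition~\ref{prop:lower} yields the pointwise-in-time lower bound $P_\varepsilon[u^\varepsilon(\cdot,t)] \geq Nc_0 - C\exp(-A/\varepsilon)$, so integrating \eqref{eq:energy-der} and using \eqref{energy-ini} gives, for every $T \in [0,\hat T_\varepsilon]$,
\begin{equation*}
\varepsilon^{-1}\int_0^T \|\bar u_t^\varepsilon\|^2_{L^2}\,dt = E_\varepsilon[u_0^\varepsilon, \bar u_1^\varepsilon] - E_\varepsilon[u^\varepsilon, \bar u_t^\varepsilon](T) \leq \frac{1}{f(\varepsilon)} + C\exp(-A/\varepsilon).
\end{equation*}
Multiplying by $\varepsilon$ already gives the claimed bound \eqref{L2-norm}; the remaining point is to verify that $\hat T_\varepsilon \geq C_1 \varepsilon^{-1} T_\varepsilon$.

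This is where the bootstrap enters. Writing $\bar u^\varepsilon(\cdot,t) - \bar u_0^\varepsilon = \int_0^t \bar u_s^\varepsilon(\cdot,s)\,ds$, Cauchy--Schwarz in time and the above estimate yield, for $t \in [0,\hat T_\varepsilon]$,
\begin{equation*}
\|\bar u^\varepsilon(\cdot,t) - \bar u_0^\varepsilon\|_{L^2}^2 \leq t \int_0^t \|\bar u_s^\varepsilon\|_{L^2}^2\,ds \leq C\,\varepsilon\, t\left[\frac{1}{f(\varepsilon)}+\exp(-A/\varepsilon)\right] \leq \frac{2C\,\varepsilon\, t}{T_\varepsilon},
\end{equation*}
using $1/f(\varepsilon)+\exp(-A/\varepsilon)\leq 2/T_\varepsilon$. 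Passing from $L^2$ to $L^1$ on the bounded interval $[a,b]$ and combining with \eqref{ass-u0}--\eqref{eq:gbarandg} applied to $u_0^\varepsilon - v$, one obtains
\begin{equation*}
\|\bar u^\varepsilon(\cdot,t) - \bar v\|_{L^1} \leq 2(b-a)\|u_0^\varepsilon - v\|_{L^1} + \sqrt{b-a}\cdot\sqrt{2C\varepsilon t/T_\varepsilon}.
\end{equation*}
Choosing $C_1$ so that the second term is at most $\delta/2$ whenever $t \leq C_1\varepsilon^{-1}T_\varepsilon$, and $\varepsilon_0$ so that the first term is at most $\delta/2$, we get a strict inequality $\|\bar u^\varepsilon(\cdot,t)-\bar v\|_{L^1} < \delta$ on this interval.

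The conclusion then follows by the usual continuity argument: if $\hat T_\varepsilon < C_1\varepsilon^{-1} T_\varepsilon$, then by continuity of $t \mapsto \bar u^\varepsilon(\cdot,t)$ in $L^1$, the strict inequality above would extend past $\hat T_\varepsilon$, contradicting maximality. Hence $\hat T_\varepsilon \geq C_1\varepsilon^{-1}T_\varepsilon$, and the bound \eqref{L2-norm} holds on this interval. The main obstacle is the closure of the bootstrap, which requires calibrating $C_1$ so that the dissipation estimate is strong enough to control the displacement of $\bar u^\varepsilon$ in $L^1$ without leaving the neighborhood in which Proposition~\ref{prop:lower} applies; this is what dictates the factor $\varepsilon^{-1}$ in front of $T_\varepsilon$.
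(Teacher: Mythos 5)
Your proposal is correct and follows essentially the same route as the paper: energy dissipation from Lemma~\ref{lem:diss-Dir}, the lower bound of Proposition~\ref{prop:lower}, and a continuation argument in which H\"older/Cauchy--Schwarz converts the dissipation bound into control of the $L^1$ drift of $\bar u^\varepsilon$ away from $\bar v$, which is what forces the threshold time to exceed $C_1\varepsilon^{-1}T_\varepsilon$. The paper packages the bootstrap slightly differently (it tracks $\int_0^{\hat T}\|\bar u_t^\varepsilon\|_{L^1}\,dt$ and takes the first time this quantity reaches $\tfrac12\delta$, rather than the supremum of times at which $\|\bar u^\varepsilon(\cdot,t)-\bar v\|_{L^1}\leq\delta$), but the estimates and the logic are the same.
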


\begin{proof}
Let $\varepsilon_0>0$ so small that for all $\varepsilon\in(0,\varepsilon_0)$, \eqref{energy-ini} holds and 
\begin{equation}\label{1/2delta}
	\|u_0^\varepsilon-v\|_{{}_{L^1}}\leq\frac14\delta(b-a)^{-1},
\end{equation}
where $\delta$ is the constant of Proposition \ref{prop:lower}. 
From \eqref{1/2delta} it follows that
\begin{equation}\label{1/2delta-int}
	\|\ub_0^\varepsilon-\bar{v}\|_{{}_{L^1}}\leq2(b-a)\|u_0^\varepsilon-v\|_{{}_{L^1}}\leq\frac12\delta.
\end{equation}
Let $\hat T>0$.
We claim that if
\begin{equation}\label{claim1}
	\int_0^{\hat T}\|\ub_t^\varepsilon\|_{{}_{L^1}}dt\leq\frac12\delta,
\end{equation}
then there exists $C>0$ such that
\begin{equation}\label{claim2}
	E_\varepsilon[u^\varepsilon, u_t^\varepsilon](\hat T)\geq Nc_0-C\exp(-A/\varepsilon).
\end{equation}
Indeed, $E_\varepsilon[u^\varepsilon, u_t^\varepsilon](\hat T)\geq P_\varepsilon[u^\varepsilon](\hat T)$ and 
inequality \eqref{claim2} follows from Proposition \ref{prop:lower} if $\|\ub^\varepsilon(\cdot,\hat T)-\bar{v}\|_{{}_{L^1}}\leq\delta$.
By using triangle inequality, \eqref{1/2delta-int} and \eqref{claim1}, we obtain
\begin{equation*}
	\|\ub^\varepsilon(\cdot,\hat T)-\bar{v}\|_{{}_{L^1}}\leq\|\ub^\varepsilon(\cdot,\hat T)-\ub_0^\varepsilon\|_{{}_{L^1}}+\|\ub_0^\varepsilon-\bar{v}\|_{{}_{L^1}}
	\leq\int_0^{\hat T}\|\ub_t^\varepsilon\|_{{}_{L^1}}+\frac12\delta\leq\delta.
\end{equation*}
By integrating the equality \eqref{eq:energy-der}, we deduce
\begin{equation}\label{dissipative}
	E_\e[u^\e_0,u^\e_1]-E_\e[u^\e,u_t^\e](\hat T)=\e^{-1}\int_0^{\hat T}\|\ub_t^\e\|^2_{{}_{L^2}}\,dt.
\end{equation}
Substituting  \eqref{energy-ini} and \eqref{claim2} in \eqref{dissipative}, one has 
\begin{equation}\label{L2-norm-Teps}
	\int_0^{\hat T}\|\ub_t^\varepsilon\|^2_{{}_{L^2}}dt\leq C_2\varepsilon\left[\frac{1}{f(\e)}+C\exp(-A/\varepsilon)\right].
\end{equation}
It remains to prove that inequality \eqref{claim1} holds for $\hat T\geq C_1\e^{-1}T_\varepsilon$.
If 
\begin{equation*}
	\int_0^{+\infty}\|\ub_t^\varepsilon\|_{{}_{L^1}}dt\leq\frac12\delta,
\end{equation*}
there is nothing to prove. 
Otherwise, choose $\hat T$ such that
\begin{equation*}
	\int_0^{\hat T}\|\ub_t^\varepsilon\|_{{}_{L^1}}dt=\frac12\delta.
\end{equation*}
Using H\"older's inequality and \eqref{L2-norm-Teps}, we infer
\begin{equation*}
	\frac12\delta\leq[\hat T(b-a)]^{1/2}\biggl(\int_0^{\hat T}\|\ub_t^\varepsilon\|^2_{{}_{L^2}}dt\biggr)^{1/2}\leq
	\left[\hat T(b-a)C_2\varepsilon\left(\frac{1}{f(\e)}+C\exp(-A/\varepsilon)\right)\right]^{1/2}.
\end{equation*}
It follows that there exists $C_1>0$ such that
\begin{equation*}
	\hat T\geq C_1\varepsilon^{-1}\min\{f(\e),\exp(A/\varepsilon)\},
\end{equation*}
and the proof is complete.
\end{proof}

Now, we can prove the main result concerning the slow evolution of the solutions in the case of Dirichlet boundary conditions.
\begin{thm}\label{main-scalar-Dir}
Assume that $W\in C^3(\R)$ satisfies \eqref{eq:ass-W} and define $\lambda:=\min\{W''(\pm1)\}$. 
Let $v$ be as in \eqref{vstruct} and let be a constant such that $A\in(0,r\sqrt{2\lambda})$. 
If $u^\varepsilon$ is the solution of \eqref{CahnHill+ine}-\eqref{initial}-\eqref{boundary-Dir} 
with initial data $u_0^{\varepsilon}$, $u_1^{\varepsilon}$ satisfying \eqref{ass-u0} and \eqref{energy-ini},
then, 
\begin{equation}\label{limit}
	\sup_{0\leq t\leq  T_\e}\|\ub^\varepsilon(\cdot,t)-\bar v\|_{{}_{L^1}}\xrightarrow[\varepsilon\rightarrow0]{}0,
\end{equation}
where $T_\varepsilon:=\min\{f(\e),\exp(A/\varepsilon)\}$.
Moreover, for any $0<\eta<1$,
\begin{equation}\label{limit-eta}
	\sup_{0\leq t\leq \e^\eta T_\e}\|u^\varepsilon(\cdot,t)-v\|_{{}_{L^1}}\xrightarrow[\varepsilon\rightarrow0]{}0.
\end{equation}
\end{thm}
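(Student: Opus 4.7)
The proof combines the energy-dissipation estimate of Proposition \ref{prop:L2-norm} with Cauchy--Schwarz in time and a standard interpolation inequality.

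\emph{Proof of \eqref{limit}.} For $t \in [0, T_\varepsilon]$, which is contained in the interval of validity $[0, C_1 \varepsilon^{-1} T_\varepsilon]$ of Proposition \ref{prop:L2-norm} once $\varepsilon$ is small enough, the plan is to estimate the oscillation of $\ub^\varepsilon$ in time by the fundamental theorem of calculus and H\"older's inequality:
\begin{align*}
	\|\ub^\varepsilon(\cdot,t) - \ub_0^\varepsilon\|_{{}_{L^1}} &\leq (b-a)^{1/2} \int_0^t \|\ub_s^\varepsilon\|_{{}_{L^2}}\, ds \\
	&\leq (b-a)^{1/2}\, t^{1/2}\left(\int_0^t \|\ub_s^\varepsilon\|^2_{{}_{L^2}}\, ds\right)^{1/2} \leq C \varepsilon^{1/2}.
\end{align*}
The last inequality uses \eqref{L2-norm} together with the elementary bounds $T_\varepsilon/f(\varepsilon) \leq 1$ and $T_\varepsilon \exp(-A/\varepsilon) \leq 1$ that follow at once from $T_\varepsilon = \min\{f(\varepsilon), \exp(A/\varepsilon)\}$. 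Coupling this with $\|\ub_0^\varepsilon - \bar v\|_{{}_{L^1}} \to 0$, which follows from \eqref{ass-u0} via \eqref{eq:gbarandg} applied to $u_0^\varepsilon - v$, the triangle inequality yields \eqref{limit}.

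\emph{Proof of \eqref{limit-eta}.} Since the Dirichlet values in \eqref{boundary-Dir} are time-independent, the difference $w(\cdot, t) := u^\varepsilon(\cdot, t) - u_0^\varepsilon$ belongs to $H^1_0(a,b)$. The key tool is the interpolation
$$\|w\|^2_{{}_{L^2}} \leq \|w\|_{{}_{H^{-1}}}\, \|w_x\|_{{}_{L^2}},$$
valid for $w \in H^1_0$, where $H^{-1}$ is the topological dual of $H^1_0$. To estimate the $H^{-1}$-norm, for any test function $\phi \in H^1_0$, integration by parts together with $\ub_x^\varepsilon = u^\varepsilon$ and $\phi(a) = \phi(b) = 0$ gives
$$\int_a^b w\,\phi\, dx = -\int_0^t \int_a^b \ub_s^\varepsilon\, \phi_x\, dx\, ds,$$
so that $\|w\|_{{}_{H^{-1}}} \leq \int_0^t \|\ub_s^\varepsilon\|_{{}_{L^2}}\, ds$. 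Repeating the argument of Step~1 with $t \leq \varepsilon^\eta T_\varepsilon$ (so that $\varepsilon^\eta T_\varepsilon/f(\varepsilon) \leq \varepsilon^\eta$ and $\varepsilon^\eta T_\varepsilon \exp(-A/\varepsilon) \leq \varepsilon^\eta$) upgrades the previous bound to $\|w\|_{{}_{H^{-1}}} \leq C \varepsilon^{(\eta+1)/2}$. For the $H^1$-seminorm, the monotonicity of $E_\varepsilon$ from Lemma \ref{lem:diss-Dir} combined with \eqref{energy-ini} gives $P_\varepsilon[u^\varepsilon](t) \leq N c_0 + 1/f(\varepsilon)$, whence $\|u_x^\varepsilon\|_{{}_{L^2}} \leq C \varepsilon^{-1/2}$, and likewise for $u_0^\varepsilon$, so that $\|w_x\|_{{}_{L^2}} \leq C \varepsilon^{-1/2}$. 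Inserting into the interpolation inequality yields $\|w\|^2_{{}_{L^2}} \leq C \varepsilon^{\eta/2}$, hence $\|w\|_{{}_{L^1}} \leq C \varepsilon^{\eta/4} \to 0$. Together with $\|u_0^\varepsilon - v\|_{{}_{L^1}} \to 0$ from \eqref{ass-u0}, this proves \eqref{limit-eta}.

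\emph{Main difficulty.} The heart of the matter is that the dissipation identity of Lemma \ref{lem:diss-Dir} controls only the primitive variable $\ub_t^\varepsilon$, which amounts to an $H^{-1}$-bound on $u_t^\varepsilon$, whereas \eqref{limit-eta} requires genuine $L^1$-closeness of $u^\varepsilon$ to the sharp-interface datum $v$. Passing from $H^{-1}$-smallness to $L^2$-smallness via interpolation costs a factor $\varepsilon^{-1/2}$ reflecting the concentration of energy at the transition layers; this trade-off is exactly what forces the time window in \eqref{limit-eta} to shrink by the factor $\varepsilon^\eta$ compared to the window in \eqref{limit}. The hypothesis that the Dirichlet values are time-independent, which places $w$ in $H^1_0$, is essential for the interpolation inequality to apply.
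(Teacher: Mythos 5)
Your proof is correct and follows essentially the same route as the paper: the same Cauchy--Schwarz-in-time estimate from Proposition \ref{prop:L2-norm} for \eqref{limit}, and for \eqref{limit-eta} the same duality step, which the paper writes out as the explicit integration by parts $\|w\|_{L^2}^2=-\int_a^b \bar w\, w_x\,dx\le\|\bar w\|_{L^2}\|w_x\|_{L^2}$ rather than invoking the abstract $H^{-1}$--$H^1_0$ interpolation, combined with the same energy bound $\|u_x^\e\|_{L^2}\le C\e^{-1/2}$. The quantitative outcome ($\|w\|_{L^1}\le C\e^{\eta/4}$) coincides with the paper's.
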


\begin{proof}
We start by observing that triangle inequality gives
\begin{equation}\label{trianglebar}
	\|\ub^\varepsilon(\cdot,t)-\bar v\|_{{}_{L^1}}\leq\|\ub^\varepsilon(\cdot,t)-\ub_0^\varepsilon\|_{{}_{L^1}}+\|\ub_0^\varepsilon-\bar v\|_{{}_{L^1}},
\end{equation}
for all $t\in[0, T_\varepsilon]$. 
The last term of inequality \eqref{trianglebar} tends to $0$ by assumption \eqref{ass-u0} and \eqref{eq:gbarandg}; 
let us show that also the first one tends to 0 as $\e\to0$.
To this end, taking $\varepsilon$ so small that $C_1\varepsilon^{-1}\geq 1$, we can apply Proposition \ref{prop:L2-norm} and, by using \eqref{L2-norm}, we deduce that
\begin{align*}
	\|\ub^\e(\cdot,t)-\ub^\e_0\|^2_{{}_{L^1}} & \leq (b-a)\|\ub^\e(\cdot,t)-\ub^\e_0\|^2_{{}_{L^2}}=(b-a) \left\|\int_0^t\ub_s^\e(\cdot,s)\,ds\right\|^2_{L^2}\\
	& \leq (b-a) t\int_0^t\|\ub_t^\e\|^2_{{}_{L^2}}dt \leq (b-a) T_\varepsilon\int_0^{T_\varepsilon}\|\ub_t^\e\|^2_{{}_{L^2}}dt \leq 2C_2\e,
\end{align*}	
for all $t\in[0,  T_\varepsilon]$. Hence \eqref{limit} follows.

Moreover, fix $\eta\in(0,1)$, and, as before, 
\begin{equation}\label{triangle}
	\|u^\varepsilon(\cdot,t)- v\|_{{}_{L^1}}\leq\|u^\varepsilon(\cdot,t)-u_0^\varepsilon\|_{{}_{L^1}}+\|u_0^\varepsilon-v\|_{{}_{L^1}},
\end{equation}
for all $t\in[0,\e^\eta T_\varepsilon]$. As before,  from \eqref{ass-u0}, we need only to control the first term on the right hand side of  \eqref{triangle}.
Taking this time $\varepsilon$ so small that $C_1\varepsilon^{-1}\geq\e^\eta$, as before we obtain 
\begin{equation}\label{eq:bar}
	\|\ub^\e(\cdot,t)-\ub^\e_0\|^2_{{}_{L^2}}=\left\|\int_0^t\ub_s^\e(\cdot,s)\,ds\right\|^2_{L^2}\leq t\int_0^t\|\ub_t^\e\|^2_{{}_{L^2}}dt\leq 2C_2\e^{1+\eta},
\end{equation}	
for all $t\in[0,\e^\eta T_\varepsilon]$.

Denote by $w^\e(x,t):=u^\e(x,t)-u^\e_0(x)$ and $\bar{w}^\e(x,t):=\ub^\e(x,t)-\ub^\e_0(x)$.
Integrating by parts and using the boundary conditions \eqref{boundary-Dir}, we infer
\begin{equation}\label{ineq:w}
	\|w^\e(\cdot,t)\|^2_{{}_{L^2}}=\int_a^b\bar{w}_x^\e(x,t)w^\e(x,t)\,dx=-\int_a^b\bar{w}^\e(x,t)w^\e_x(x,t)\,dx\leq\|\bar{w}^\e(\cdot,t)\|_{{}_{L^2}}\|w^\e_x(\cdot,t)\|_{{}_{L^2}}.
\end{equation}
In order to estimate the last term of \eqref{ineq:w}, we use \eqref{eq:bar}, the assumption \eqref{energy-ini} and \eqref{dissipative}.
Indeed, since $\int_a^bu^\e_x(x,t)^2dx\leq C/\varepsilon$ for all $t\geq0$, we end up with
\begin{equation*}
	\|u^\e(\cdot,t)-u^\e_0\|^2_{{}_{L^2}}\leq C\e^{\eta/2},
\end{equation*}
for all $t\in[0,\e^\eta T_\varepsilon]$.
It follows that
\begin{equation*}
	\sup_{0\leq t\leq \e^\eta T_\varepsilon}\|u^\varepsilon(\cdot,t)-u_0^\varepsilon\|_{{}_{L^1}}\leq C\e^{\eta/4}.
\end{equation*}
Combining the latter estimate, \eqref{triangle} and by passing to the limit as $\varepsilon\to0$, we obtain \eqref{limit-eta}.
\end{proof}

\subsection{Neumann boundary conditions}\label{sec:Neu}
In the case of homogeneous Neumann boundary conditions \eqref{boundary-Neu}, we use a different primitive with respect to \eqref{u-Diri}.
If $u$ is the solution of the initial boundary value problem \eqref{CahnHill+ine}-\eqref{initial}-\eqref{boundary-Neu}, then the function
\begin{equation*}\label{u-Neu}
	\tilde{u}(x,t):=\int_a^xu(y,t)\,dy
\end{equation*}
satisfies the integrated version \eqref{integrated} with Dirichlet boundary conditions
\begin{equation}\label{bound-ut}
	\ut(a,t)=0, \quad \ut(b,t)=\int_a^bu(y,t)\,dy, \qquad \ut_{xx}(a,t)=\ut_{xx}(b,t)=0, \qquad \forall\,t\geq0,
\end{equation}
and initial data
\begin{equation*}
	\ut(x,0)=\int_a^xu_0(y)\,dy, \quad \ut_t(x,0)=\int_a^xu_1(y)\,dy, \qquad x\in[a,b].
\end{equation*}
Analogously to \eqref{ell-dir}, we observe that, if $u\in L^2(a,b)$ with $\displaystyle\int_a^b u\,dx=0$, then we can write $\ut=-y_x$, where $y_x$ satisfies 
\begin{equation*}
	-y_{xx}=u \qquad \mbox{ in } \, (a,b), \qquad y_x(a)=y_x(b)=0.
\end{equation*}

Let us consider the total mass $m(t):=\ut(b,t)=\displaystyle\int_a^b u(x,t)\,dx$.
A straightforward computation shows that the total mass $m$ satisfies the ordinary differential equation 
\begin{equation*}
	\tau m''(t)+m'(t)=0, \qquad m(0)=\int_a^b u_0(x)\,dx, \quad m'(0)=\int_a^b u_1(x)\,dx.
\end{equation*}
Thus, $m(t)=m(0)+\tau m'(0)(1-e^{-t/\tau})$ and the total mass $m$ is conserved if and only if 
\begin{equation}\label{ass-u1}
	\int_a^bu_1(y)\,dy=0.
\end{equation}
In other words, if $u_1$ satisfies \eqref{ass-u1}, then
\begin{equation}\label{conservation}
	m(t)\equiv M:=\int_a^bu_0(y)\,dy.
\end{equation}
This is a fundamental difference with respect to the classic Cahn--Hilliard equation, 
where the homogeneous Neumann boundary conditions implies \eqref{conservation}.
The conservation of the total mass \eqref{conservation} ensures the dissipative character of the equation \eqref{CahnHill+ine} 
with homogeneous boundary conditions \eqref{boundary-Neu}.
Indeed, if the initial velocity $u_1$ satisfies the assumption \eqref{ass-u1}, 
then the energy functional $E_\e[u,\ut]$ defined by \eqref{energy} decays along the orbits of \eqref{CahnHill+ine}-\eqref{boundary-Neu}.
\begin{lem}\label{lem:enNeu}
Let $(u,u_t)\in C([0,T],H^3(a,b)\times H^1(a,b))$ be the solution of \eqref{CahnHill+ine}-\eqref{initial}-\eqref{boundary-Neu}.
If $u_1$ satisfies \eqref{ass-u1}, then 
\begin{equation*}
	\frac{d}{dt}E_\e[u,\ut_t](t)=-\e^{-1}\|\ut_t(\cdot,t)\|^2_{{}_{L^2}},
\end{equation*}
for any $t\in(0,T)$.
\end{lem}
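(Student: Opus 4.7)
The strategy is to mirror the Dirichlet computation of Lemma \ref{lem:diss-Dir}, but with two crucial adjustments dictated by the different boundary conditions: the boundary contributions for $u$ will vanish thanks to $u_x=u_{xxx}=0$ at the endpoints, while the boundary contributions for $\ut$ will vanish only through conservation of total mass, which is exactly where hypothesis \eqref{ass-u1} enters.

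First I would differentiate directly and integrate by parts:
\begin{equation*}
\e\frac{d}{dt}E_\e[u,\ut_t]
=\tau\!\int_a^b\ut_t\ut_{tt}\,dx
+\!\int_a^b\bigl[\e^2 u_x u_{tx}+W'(u)u_t\bigr]\,dx.
\end{equation*}
The boundary term arising from the integration by parts on $\e^2 u_x u_{tx}$ is $[\e^2 u_x u_t]_a^b$, which vanishes by \eqref{boundary-Neu}. After this step one has
\begin{equation*}
\e\frac{d}{dt}E_\e[u,\ut_t]
=\tau\!\int_a^b\ut_t\ut_{tt}\,dx
+\!\int_a^b\bigl(-\e^2 u_{xx}+W'(u)\bigr)u_t\,dx.
\end{equation*}

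Next, since $\ut_x=u$ gives $u_t=\ut_{tx}$, I would integrate the second integral by parts in $x$:
\begin{equation*}
\int_a^b\bigl(-\e^2 u_{xx}+W'(u)\bigr)\ut_{tx}\,dx
=\Bigl[\bigl(-\e^2 u_{xx}+W'(u)\bigr)\ut_t\Bigr]_a^b
-\!\int_a^b\bigl(-\e^2 u_{xx}+W'(u)\bigr)_x\ut_t\,dx.
\end{equation*}
Here is where \eqref{ass-u1} is used: by definition $\ut_t(a,t)=0$ and $\ut_t(b,t)=m'(t)$; the conservation law \eqref{conservation} derived from assumption \eqref{ass-u1} yields $m'(t)\equiv 0$, so $\ut_t$ vanishes at both endpoints, killing this boundary term. (Note that the Neumann conditions \eqref{boundary-Neu} control $u_x$ and $u_{xxx}$ but not $u_{xx}$, so one really needs to kill the whole bracket via $\ut_t$ rather than via the coefficient.)

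Finally, I would invoke the integrated equation \eqref{integrated}, which for our function reads $(-\e^2 u_{xx}+W'(u))_x=\tau\ut_{tt}+\ut_t$. Substituting gives
\begin{equation*}
\e\frac{d}{dt}E_\e[u,\ut_t]
=\tau\!\int_a^b\ut_t\ut_{tt}\,dx-\!\int_a^b(\tau\ut_{tt}+\ut_t)\ut_t\,dx
=-\!\int_a^b\ut_t^2\,dx,
\end{equation*}
which is the claimed identity after dividing by $\e$. The only real subtlety, as highlighted, is the boundary term from the second integration by parts; everything else is a direct transcription of the Dirichlet proof.
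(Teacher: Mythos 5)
Your proof is correct and follows essentially the same route as the paper's: differentiate the energy, kill the $u$-boundary term via the Neumann conditions, rewrite $u_t=\ut_{tx}$ and integrate by parts so that the remaining boundary contribution carries a factor $\ut_t$, which vanishes at $x=a$ by definition and at $x=b$ by the mass conservation forced by \eqref{ass-u1}, and finally substitute the integrated equation \eqref{integrated}. Your remark that the Neumann conditions alone cannot kill the bracket $-\e^2u_{xx}+W'(u)$ at the boundary, so that one genuinely needs $\ut_t(b,t)=m'(t)=0$, is exactly the point the paper makes after its equation \eqref{eq:forrem}.
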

\begin{proof}
Let us proceed as in the proof of Lemma \eqref{lem:diss-Dir}:
\begin{equation*}
	\frac{d}{dt}E_\e[u,\ut_t]=\int_a^b\left[\frac\tau\e\ut_t\ut_{tt}-\e u_{xx}u_t+\frac{W'(u)u_t}{\e}\right]dx+(\e u_xu_t)\bigg|_a^b.
\end{equation*}
Here, the last term is zero because of the homogeneous Neumann boundary conditions \eqref{boundary-Neu}.
Using that $\ut_x=u$ and integrating by parts, we obtain
\begin{equation}\label{eq:forrem}
	\e\frac{d}{dt}E_\e[u,\ut_t]=\int_a^b\left[\tau\ut_t\ut_{tt}+\e^2 \ut_{xxxx}\ut_t-W'(\ut_x)_x\ut_t\right]dx-(\ut_{xxx}\ut_t)\bigg|_a^b+(W'(\ut_x)\ut_t)\bigg|_a^b.
\end{equation}
The boundary conditions \eqref{bound-ut} do not ensure that the last two terms vanish. 
However, if $u^\e_1$ satisfies the assumption \eqref{ass-u1} then $\ut_t(b,t)=0$ for any $t$. 
Therefore, we can conclude as in Lemma \ref{lem:diss-Dir} that
\begin{equation*}
	\e\frac{d}{dt}E_\e[u,\ut_t]=-\int_a^b(\ut_t)^2dx,
\end{equation*}
and the proof is complete.
\end{proof}

\begin{rem}\label{rem:nozeromean} 
The condition \eqref{ass-u1} is instrumental in the proof of the monotone decreasing nature of $E_\e[u,\ut_t](t)$ performed in the previous lemma. 
However, for our final aim of showing the slow evolution of solutions to \eqref{CahnHill+ine}-\eqref{boundary-Neu}, 
a sufficiently small increase of the energy is allowed. 
This property can be proved even if \eqref{ass-u1} is violated, provided the initial velocity $u_1$ is sufficiently small, 
and one knows a control of the boundary values of the solution. 
Indeed, from \eqref{eq:forrem} it follows
\begin{equation*}
	\e\frac{d}{dt}E_\e[u,\ut_t]=-\int_a^b(\ut_t)^2dx-\left(u_{xx}(b,t)+W'(u(b,t))\right)\ut_t(b,t).
\end{equation*}
Using that $\ut_t(b,t)=m'(t)=m'(0)e^{-t/\tau}$ and integrating we obtain
\begin{align*}
	E_\e[u,\ut_t](T)-E_\e[u,\ut_t](0)= & -\e^{-1}\int_0^T\|\ut_t(\cdot,t)\|^2_{{}_{L^2}}\,dt\\
	&+\e^{-1}m'(0)\int_0^T\left(-u_{xx}(b,t)+W'(u(b,t))\right)e^{-t/\tau}\,dt.
\end{align*}
Assuming that the last integral is uniformly bounded in time, we deduce that
\begin{equation*}
	E_\e[u,\ut_t](T)+\e^{-1}\int_0^T\|\ut_t(\cdot,t)\|^2_{{}_{L^2}}\,dt\leq E_\e[u,\ut_t](0)+C\e^{-1}|m'(0)|,
\end{equation*}
for any $T>0$.
Therefore, in general the energy is not decreasing, but if the term $\e^{-1}|m'(0)|$ is very small 
we can also obtain slow evolution for the solutions applying the energy approach. 
\end{rem}

We now proceed in the proof of the slow motion of the solutions in the case of homogeneous Neumann boundary conditions under the extra zero--mean assumption \eqref{ass-u1}.
Indeed, this assumption enables us to use the same strategy of the case of Dirichlet boundary conditions, in particular because all boundary terms vanish.

The lower bound \eqref{lower} holds true also if a function $\ut$ is close enough to $\tilde{v}$ in $L^1$. 
Indeed, by reasoning as in the proof of Proposition \ref{prop:lower}, we can prove the existence of two points such that \eqref{2points} holds. 
Then, in the same way of Proposition \ref{prop:lower} we obtain that there exists $\delta>0$ such that
\begin{equation}\label{lower-Neu}
	\|\ut-\tilde{v}\|_{{}_{L^1}}\leq\delta \qquad \quad \Longrightarrow \qquad \quad P_\varepsilon[u]\geq Nc_0-C\exp(-A/\varepsilon).
\end{equation}
Noting  that an analogous of \eqref{eq:gbarandg} also holds true for $\tilde g$, using \eqref{lower-Neu} and reasoning as the case of Dirichlet boundary conditions,
 we can prove the slow evolution also in the case of homogeneous Neumann boundary conditions. 
\begin{thm}\label{main-scalar-Neu}
Assume that $W\in C^3(\R)$ satisfies \eqref{eq:ass-W} and define $\lambda:=\min\{W''(\pm1)\}$. 
Let $v$ be as in \eqref{vstruct} and let be a constant such that $A\in(0,r\sqrt{2\lambda})$.
In addition, let $u^\varepsilon$ be the solution of \eqref{CahnHill+ine}-\eqref{initial}-\eqref{boundary-Neu} 
with initial data $u_0^{\varepsilon}$, $u_1^{\varepsilon}$ satisfying \eqref{ass-u0} and 
\begin{equation}\label{energy-ini-Neu}
	E_\e[u_0^\e,\ut_1^\e]\leq Nc_0+\frac{1}{f(\e)},
\end{equation}
for any $\e\ll 1$, and $c_0$ and $f$ as in \eqref{energy-ini}.
Moreover, assume that $u^\e_1$ satisfies \eqref{ass-u1}.
Then, 
\begin{equation*}\label{limitNeu}
	\sup_{0\leq t\leq  T_\e}\|\ut^\varepsilon(\cdot,t)-\tilde v\|_{{}_{L^1}}\xrightarrow[\varepsilon\rightarrow0]{}0,
\end{equation*}
where $T_\varepsilon:=\min\{f(\e),\exp(A/\varepsilon)\}$.
Moreover, for any $0<\eta<1$,
\begin{equation*}\label{limit-etaNeu}
	\sup_{0\leq t\leq \e^\eta T_\e}\|u^\varepsilon(\cdot,t)-v\|_{{}_{L^1}}\xrightarrow[\varepsilon\rightarrow0]{}0.
\end{equation*}
\end{thm}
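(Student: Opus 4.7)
The plan is to follow the proof of Theorem~\ref{main-scalar-Dir} almost verbatim, replacing the Dirichlet primitive $\ub$ by the Neumann primitive $\ut$ and $\bar v$ by $\tilde v$. The hypothesis \eqref{ass-u1} enters in exactly two places, and these are the only points requiring distinct care. First, it guarantees via \eqref{conservation} that $\ut_t(b,t)=m'(t)=0$, so that the boundary terms in \eqref{eq:forrem} vanish and Lemma~\ref{lem:enNeu} yields the dissipation identity
\begin{equation*}
    \frac{d}{dt}E_\e[u^\e,\ut^\e_t] = -\e^{-1}\|\ut^\e_t(\cdot,t)\|^2_{{}_{L^2}}.
\end{equation*}
Second, it will be used below to ensure that a boundary contribution cancels when relating the $L^2$-norm of $u^\e-u^\e_0$ to that of $\ut^\e-\ut^\e_0$.

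The key intermediate step is the Neumann analog of Proposition~\ref{prop:L2-norm}: there exist $\e_0,C_1,C_2>0$ such that
\begin{equation*}
    \int_0^{C_1\e^{-1}T_\e}\|\ut^\e_t\|^2_{{}_{L^2}}\,dt \le C_2\e\left[\frac{1}{f(\e)}+\exp(-A/\e)\right], \qquad \e\in(0,\e_0).
\end{equation*}
I would prove this by the same bootstrap argument used in Proposition~\ref{prop:L2-norm}. One first observes that an inequality analogous to \eqref{eq:gbarandg} holds for $\tilde g$ (trivially, since $|\tilde g(x)|\le\int_a^b|g|\,dy$), so \eqref{ass-u0} yields $\|\ut^\e_0-\tilde v\|_{{}_{L^1}}\le\delta/2$ for $\e$ small, with $\delta$ the constant in \eqref{lower-Neu}. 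Assuming $\int_0^{\hat T}\|\ut^\e_t\|_{{}_{L^1}}\,dt\le\delta/2$, the triangle inequality gives $\|\ut^\e(\cdot,\hat T)-\tilde v\|_{{}_{L^1}}\le\delta$, whence by \eqref{lower-Neu} and the energy assumption \eqref{energy-ini-Neu} the dissipation identity integrated on $[0,\hat T]$ yields the above $L^2$ bound on $\hat T$. H\"older's inequality in time then forces $\hat T\ge C_1\e^{-1}T_\e$.

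The first limit follows immediately: $\|\ut^\e(\cdot,t)-\tilde v\|_{{}_{L^1}}\le\|\ut^\e(\cdot,t)-\ut^\e_0\|_{{}_{L^1}}+\|\ut^\e_0-\tilde v\|_{{}_{L^1}}$, where the second summand tends to zero by construction, and the first is bounded by $[(b-a)\,t\int_0^t\|\ut^\e_t\|^2_{{}_{L^2}}\,dt]^{1/2}\le C\e^{1/2}$ for $t\le T_\e$ thanks to the previous estimate. For the second limit, setting $w^\e=u^\e-u^\e_0$ and $\tilde w^\e=\ut^\e-\ut^\e_0$ and integrating by parts,
\begin{equation*}
    \|w^\e(\cdot,t)\|^2_{{}_{L^2}}=\int_a^b\tilde w^\e_x\,w^\e\,dx = -\int_a^b\tilde w^\e\,w^\e_x\,dx+\bigl[\tilde w^\e\,w^\e\bigr]_a^b.
\end{equation*}
The boundary term vanishes: $\tilde w^\e(a,t)=0$ by construction, while $\tilde w^\e(b,t)=m(t)-M=0$ by mass conservation \eqref{conservation}, and this is precisely where \eqref{ass-u1} is exploited for the second time. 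Controlling $\|w^\e_x\|_{{}_{L^2}}$ by $C/\sqrt\e$ via \eqref{energy-ini-Neu} and combining with the $L^2$ bound on $\tilde w^\e$ for $t\le\e^\eta T_\e$ reproduces the estimate of Theorem~\ref{main-scalar-Dir} and gives the second limit. No genuinely new obstacle arises; the main point to verify is simply the cancellation of the boundary contribution at $x=b$, which is why the zero-mean hypothesis on $u^\e_1$ is indispensable throughout.
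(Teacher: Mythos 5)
Your proof is correct and follows essentially the same route as the paper, which itself only sketches this case by noting that the Dirichlet argument carries over with $\ub,\bar v$ replaced by $\ut,\tilde v$ and the lower bound \eqref{lower-Neu}. You additionally make explicit the one detail the paper leaves implicit, namely that the boundary term $\bigl[\tilde w^\e w^\e\bigr]_a^b$ in the integration by parts vanishes because $\ut^\e(a,t)=0$ and mass conservation \eqref{conservation} gives $\tilde w^\e(b,t)=0$, which is a correct and worthwhile observation.
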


\subsection{Layer dynamics}\label{sec:layer}
Theorems \ref{main-scalar-Dir} and \ref{main-scalar-Neu} state that the solutions of \eqref{CahnHill+ine} with either Dirichlet or Neumann boundary conditions 
and appropriate initial data evolve very slow in time and the profile $u^\e$ maintains the same $N$--transition layer structure of the initial profile $u_0^\e$. 
Using a standard procedure (see \cite{Bron-Kohn,Grant,Folino,Folino2}), it is possible to show that Theorem \ref{main-scalar-Dir} 
(Theorem \ref{main-scalar-Neu} in the case of homogeneous Neumann boundary conditions) implies that the transition points $h_1,\dots,h_N$ move with a very small velocity. 
This result is obtained in the same way for either Dirichlet or Neumann boundary conditions. 
For definiteness, we consider the case of Dirichlet boundary conditions \eqref{boundary-Dir}.

Fix a piecewise constant function $v$ as in \eqref{vstruct}; its {\it interface} $I[v]$ is defined by 
\begin{equation*}
	I[v]:=\{h_1,h_2,\ldots,h_N\}.
\end{equation*}
For an arbitrary function $u:[a,b]\rightarrow\mathbb{R}$ and an arbitrary closed subset $K\subset\R\backslash\{\pm1\}$,
the {\it interface} $I_K[u]$ is defined by
\begin{equation*}
	I_K[u]:=u^{-1}(K).
\end{equation*}
Finally, for any $A,B\subset\mathbb{R}$ the {\it Hausdorff distance} $d(A,B)$ between $A$ and $B$ is defined by 
\begin{equation*}
	d(A,B):=\max\biggl\{\sup_{\alpha\in A}d(\alpha,B),\,\sup_{\beta\in B}d(\beta,A)\biggr\},
\end{equation*}
where $d(\beta,A):=\inf\{|\beta-\alpha|: \alpha\in A\}$. 

The following result is purely variational in character and states that, if a function $u\in H^1([a,b])$ is such that $\ub$ is close to $\bar{v}$ in $L^1$ and 
$P_\varepsilon[u]$ exceeds of a small quantity the minimum energy to have $N$ transitions, then 
the distance between the interfaces $I_K[u]$ and $I_K[v]$ is small.  
\begin{lem}\label{lem:interface}
Assume that $W\in C^3(\R)$ satisfies \eqref{eq:ass-W} and let $v$ be as in \eqref{vstruct}.
Given $\delta_1\in(0,r)$ and a closed subset $K\subset\R\backslash\{\pm1\}$, 
there exist constants $\hat\delta,\varepsilon_0,\Gamma>0$ (independent on $\e$) such that for any $u\in H^1([a,b])$ satisfying
\begin{equation}\label{eq:lem-interf}
	\|\ub-\bar v\|_{{}_{L^1}}<\hat\delta \qquad \quad \mbox{ and } \qquad \quad P_\varepsilon[u]\leq Nc_0+\Gamma,
\end{equation}
for all $\varepsilon\in(0,\varepsilon_0)$, we have
\begin{equation}\label{lem:d-interfaces}
	d(I_K[u], I[v])<\tfrac12\delta_1.
\end{equation}
\end{lem}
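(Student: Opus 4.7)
The plan is to argue by contradiction: assume $d(I_K[u], I[v]) \geq \delta_1/2$ and derive a violation of one of the two hypotheses in \eqref{eq:lem-interf}. Since $K$ is closed and disjoint from $\{\pm 1\}$, set
\[
\rho := \min\bigl( \operatorname{dist}(K, 1),\, \operatorname{dist}(K, -1) \bigr) > 0,
\qquad c_K := \inf_{k \in K,\, \sigma \in \{\pm 1\}} \left| \int_\sigma^k \sqrt{2W(s)}\,ds \right| > 0.
\]
Fix constants in the order: $\hat r \in (0, \delta_1/4)$ (allowed since $\delta_1 < r$); then $\rho_2 \in (0, \rho)$ so small that, by the quadratic growth of $W$ at $\pm 1$, each of $\int_{-1}^{-1+\rho_2}\sqrt{2W(s)}\,ds$ and $\int_{1-\rho_2}^{1}\sqrt{2W(s)}\,ds$ is less than $c_K/(4(N+1))$; and finally $\hat\delta$ small enough that the dual-test-function argument used in the proof of Proposition \ref{prop:lower} to establish \eqref{2points} applies on any subinterval of length $\hat r$ where $v$ is identically $\pm 1$. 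Set $\Gamma := c_K/2$; since the estimate carries no $\varepsilon$-dependence, $\varepsilon_0$ is arbitrary. By the definition of Hausdorff distance, the contradiction hypothesis splits into two alternatives.

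\emph{Case (a): there exists $h_j \in I[v]$ with $(h_j - \delta_1/2, h_j + \delta_1/2) \cap I_K[u] = \emptyset$.} Assuming (to fix signs) that the jump of $v$ at $h_j$ is from $-1$ to $+1$, the duality argument from Proposition \ref{prop:lower} applied separately to $(h_j - \hat r, h_j)$ and to $(h_j, h_j + \hat r)$ produces $y_-, y_+$ with $|u(y_-)+1|, |u(y_+)-1| < \rho_2$. Pick any $k \in K$; since $u(y_-) < -1 + \rho \leq k \leq 1 - \rho < u(y_+)$, the intermediate value theorem supplies $x \in (y_-, y_+) \subset (h_j - \delta_1/2, h_j + \delta_1/2)$ with $u(x) = k$, contradicting the assumption.

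\emph{Case (b): there exists $x^* \in I_K[u]$ with $|x^* - h_j| \geq \delta_1/2$ for every $j$.} Then $v$ is constant, say $\equiv +1$, on $(x^* - \delta_1/2, x^* + \delta_1/2)$. The same duality argument on $(x^* - \hat r, x^*)$ and on $(x^*, x^* + \hat r)$ produces $y_\pm$ with $|u(y_\pm) - 1| < \rho_2$, and Young's inequality \eqref{eq:Young} on $[y_-, x^*]$ and on $[x^*, y_+]$ yields the bump contribution
\[
\int_{y_-}^{y_+} \left[ \tfrac{\varepsilon}{2}u_x^2 + \tfrac{W(u)}{\varepsilon} \right] dx \;\geq\; 2\int_{u(x^*)}^{1} \sqrt{2W(s)}\,ds - 2\int_{1-\rho_2}^1 \sqrt{2W(s)}\,ds \;\geq\; 2 c_K - \tfrac{c_K}{2(N+1)}.
\]
In parallel, on each transition interval $(h_j - \hat r, h_j + \hat r)$, the $y_j^\pm$ supplied by Proposition \ref{prop:lower}'s construction together with the same Young's inequality give $\int_{h_j - \hat r}^{h_j + \hat r} \bigl[ \tfrac{\varepsilon}{2}u_x^2 + \tfrac{W(u)}{\varepsilon}\bigr]\,dx \geq c_0 - c_K/(2(N+1))$. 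The key geometric observation is that $\hat r < \delta_1/4$ makes all $N+1$ intervals pairwise disjoint: the bump interval is disjoint from each transition interval because $|x^* - h_j| \geq \delta_1/2 > 2\hat r$, and the transition intervals are mutually disjoint because $r > 2\hat r$ and consecutive $h_j$'s differ by at least $2r$. Summing the $N+1$ disjoint contributions, $P_\varepsilon[u] \geq Nc_0 + 2c_K - (N+1)\cdot c_K/(2(N+1)) = Nc_0 + 3c_K/2 > Nc_0 + \Gamma$, contradicting \eqref{eq:lem-interf}. The one delicate step is precisely this geometric disjointness, made possible by $\delta_1 < r$ together with $\hat r < \delta_1/4 < r/2$; once the intervals are arranged, the rest is a direct reuse of the duality-plus-Young machinery from Proposition \ref{prop:lower}, and notably no exponential-in-$1/\varepsilon$ estimate is needed — the proof is purely variational.
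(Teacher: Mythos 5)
Your proof is correct and takes essentially the same route as the paper's: locate, via the duality argument of Proposition \ref{prop:lower}, points near each $h_i$ (and near any putative interface point of $u$ far from $I[v]$) where $u$ is within a prescribed distance of $\pm1$, and then apply Young's inequality \eqref{eq:Young} on pairwise disjoint intervals to force $P_\varepsilon[u]>Nc_0+\Gamma$, a contradiction. You are in fact more explicit than the paper --- which compresses everything into a single displayed inequality and does not separate the two directions of the Hausdorff distance --- and the only blemish is in your Case (a), where ``any $k\in K$'' need not satisfy $-1+\rho\le k\le 1-\rho$ since $K$ may contain points outside $[-1,1]$; one should take $k\in K\cap(-1,1)$ there (the degenerate situation $K\cap(-1,1)=\emptyset$ is not addressed by the paper's proof either).
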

\begin{proof}
Fix $\delta_1\in(0,r)$ and choose $\rho>0$ small enough that 
\begin{equation*}
	I_\rho:=(-1-\rho,-1+\rho)\cup(1-\rho,1+\rho)\subset\R\backslash K, 
\end{equation*}
and 
\begin{equation*}
	\inf\left\{\left|\int_{\xi_1}^{\xi_2}\sqrt{2W(s)}\,ds\right| : \xi_1\in K, \xi_2\in I_\rho\right\}>2\Gamma,
\end{equation*}
where
\begin{equation*}
	\Gamma:=2N\max\left\{\int_{1-\rho}^{1}\sqrt{2W(s)}\,ds, \, \int_{-1}^{-1+\rho}\sqrt{2W(s)}\,ds \right\}.
\end{equation*}
By reasoning as in the proof of \eqref{2points} in Proposition \ref{prop:lower}, we can prove that for each $i$ there exist
\begin{equation*}
	x^-_{i}\in(h_i-\delta_1/2,h_i) \qquad \textrm{and} \qquad x^+_{i}\in(h_i,h_i+\delta_1/2),
\end{equation*}
such that
\begin{equation*}
	|u(x^-_{i})-v(x^-_{i})|<\rho \qquad \textrm{and} \qquad |u(x^+_{i})-v(x^+_{i})|<\rho.
\end{equation*}
Suppose that \eqref{lem:d-interfaces} is violated. 
Using \eqref{eq:Young}, we deduce
\begin{equation}\label{diseq:E1}
	P_\varepsilon[u]\geq\sum_{i=1}^N\left|\int_{u(x^-_{i})}^{u(x^+_{i})}\sqrt{2W(s)}\,ds\right|+\inf\left\{\left|\int_{\xi_1}^{\xi_2}\sqrt{2W(s)}\,ds\right| : \xi_1\in K, \xi_2\in I_\rho\right\}. 
\end{equation}
On the other hand, we have
\begin{align*}
	\left|\int_{u(x^-_{i})}^{u(x^+_{i})}\sqrt{2W(s)}\,ds\right|&\geq\int_{-1}^{1}\sqrt{2W(s)}\,ds-\int_{-1}^{-1+\rho}\sqrt{2W(s)}\,ds-\int_{1-\rho}^{1}\sqrt{2W(s)}\,ds\\
	&\geq\int_{-1}^{1}\sqrt{2W(s)}\,ds-\frac{\Gamma}{N}. 
\end{align*}
Substituting the latter bound in \eqref{diseq:E1} and recalling the definition of $c_0$ \eqref{c_0}, we infer
\begin{equation*}
	P_\varepsilon[u]\geq Nc_0-\Gamma+\inf\left\{\left|\int_{\xi_1}^{\xi_2}\sqrt{2W(s)}\,ds\right| : \xi_1\in K, \xi_2\in I_\rho\right\}.
\end{equation*}
For the choice of $\rho$ and assumption \eqref{eq:lem-interf}, we obtain
\begin{align*}
	P_\varepsilon[u]>Nc_0+\Gamma\geq P_\varepsilon[u],
\end{align*}
which is a contradiction. Hence, the bound \eqref{lem:d-interfaces} is true.
\end{proof}

Theorem \ref{main-scalar-Dir} and Lemma \ref{lem:interface} permit to prove the following result on the motion of the transition points $h_1,\ldots,h_N$.
\begin{thm}\label{thm:interface}
Assume that $W\in C^3(\R)$ satisfies \eqref{eq:ass-W}.
Let $u^\varepsilon$ be the solution of \eqref{CahnHill+ine}-\eqref{initial}-\eqref{boundary-Dir} 
with initial data $u_0^{\varepsilon}$, $u_1^{\varepsilon}$ satisfying \eqref{ass-u0} and \eqref{energy-ini}. 
Given $\delta_1\in(0,r)$ and a closed subset $K\subset\R\backslash\{\pm1\}$, set
\begin{equation*}
	t_\varepsilon(\delta_1)=\inf\{t:\; d(I_K[u^\varepsilon(\cdot,t)],I_K[u_0^\varepsilon])>\delta_1\}.
\end{equation*}
There exists $\varepsilon_0>0$ such that if $\varepsilon\in(0,\varepsilon_0)$ then
\begin{equation*}
	t_\varepsilon(\delta_1)>\min\{f(\e),\exp(A/\varepsilon)\}.
\end{equation*}
\end{thm}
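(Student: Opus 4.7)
The plan is to combine the variational Lemma \ref{lem:interface} with the slow motion result in Theorem \ref{main-scalar-Dir} and the energy monotonicity of Lemma \ref{lem:diss-Dir}. The idea is that at every time $t \in [0,T_\e]$ the solution $u^\e(\cdot,t)$ satisfies the hypotheses of Lemma \ref{lem:interface}, so that its $K$-interface is permanently close to the interface $I[v]$ of the reference piecewise constant function; a triangle inequality for the Hausdorff distance then prevents $I_K[u^\e(\cdot,t)]$ from drifting away from $I_K[u_0^\e]$.

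First I would fix $\delta_1\in(0,r)$ and the closed set $K\subset\R\setminus\{\pm1\}$, and apply Lemma \ref{lem:interface} to produce the thresholds $\hat\delta$, $\Gamma>0$ depending on $\delta_1$ and $K$. Since $1/f(\e)\to 0$ as $\e\to 0^+$, there exists $\e_0>0$ such that $1/f(\e)\leq\Gamma$ for all $\e\in(0,\e_0)$. Next I invoke Lemma \ref{lem:diss-Dir}: the map $t\mapsto E_\e[u^\e,\bar u^\e_t](t)$ is non-increasing, so using \eqref{energy-ini} we get
\begin{equation*}
P_\e[u^\e(\cdot,t)]\;\leq\; E_\e[u^\e,\bar u^\e_t](t)\;\leq\; E_\e[u_0^\e,\bar u_1^\e]\;\leq\; Nc_0+\tfrac{1}{f(\e)}\;\leq\; Nc_0+\Gamma,
\end{equation*}
for every $t\geq 0$, so the energy hypothesis of Lemma \ref{lem:interface} is satisfied uniformly in time. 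Simultaneously, by Theorem \ref{main-scalar-Dir}, $\sup_{0\leq t\leq T_\e}\|\bar u^\e(\cdot,t)-\bar v\|_{L^1}\to 0$ as $\e\to 0^+$, so after possibly shrinking $\e_0$ we also have $\|\bar u^\e(\cdot,t)-\bar v\|_{L^1}<\hat\delta$ for every $t\in[0,T_\e]$ and every $\e\in(0,\e_0)$.

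With both hypotheses of Lemma \ref{lem:interface} verified, I conclude that for all $\e\in(0,\e_0)$ and all $t\in[0,T_\e]$,
\begin{equation*}
d\bigl(I_K[u^\e(\cdot,t)],\,I[v]\bigr)<\tfrac{1}{2}\delta_1.
\end{equation*}
Applying this in particular at $t=0$ (note that $u_0^\e$ also fits Lemma \ref{lem:interface} by \eqref{ass-u0}, \eqref{eq:gbarandg} and \eqref{energy-ini}) gives $d(I_K[u_0^\e],I[v])<\delta_1/2$. The Hausdorff distance is a genuine metric on the closed subsets of $[a,b]$, so the triangle inequality produces
\begin{equation*}
d\bigl(I_K[u^\e(\cdot,t)],\,I_K[u_0^\e]\bigr)\;\leq\; d\bigl(I_K[u^\e(\cdot,t)],\,I[v]\bigr)+d\bigl(I[v],\,I_K[u_0^\e]\bigr)\;<\;\delta_1,
\end{equation*}
for every $t\in[0,T_\e]$. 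By definition of $t_\e(\delta_1)$ this forces $t_\e(\delta_1)>T_\e=\min\{f(\e),\exp(A/\e)\}$, which is the desired bound.

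The proof is essentially a bookkeeping exercise once Theorem \ref{main-scalar-Dir} and Lemma \ref{lem:interface} are in hand; the only subtlety I anticipate is making sure the constant $\Gamma$ from Lemma \ref{lem:interface} (which depends on the given $\delta_1$ and $K$) can be majorized by $1/f(\e)$ for small $\e$, and making sure that the choice of $\e_0$ can be taken uniformly so that both the smallness of $\|\bar u^\e-\bar v\|_{L^1}$ on $[0,T_\e]$ and the energy bound hold simultaneously. Since $1/f(\e)\to 0$ by assumption, this only requires choosing $\e_0$ small enough, and no further estimate is needed.
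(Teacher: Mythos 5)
Your proof is correct and follows essentially the same route as the paper's: verify the two hypotheses of Lemma \ref{lem:interface} for $u_0^\e$ and for $u^\e(\cdot,t)$ on $[0,T_\e]$ (using \eqref{ass-u0}--\eqref{energy-ini}, the energy monotonicity of Lemma \ref{lem:diss-Dir}, and the $L^1$ closeness from Theorem \ref{main-scalar-Dir}), then combine the two resulting bounds $d(I_K[\cdot],I[v])<\delta_1/2$ via the triangle inequality for the Hausdorff distance. Your write-up is merely slightly more explicit than the paper's about how $\Gamma$ is majorized and how $\e_0$ is chosen uniformly.
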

\begin{proof}
Let $\varepsilon_0>0$ so small that the assumptions on the initial data \eqref{ass-u0}, \eqref{energy-ini} 
imply that $u_0^\varepsilon$ satisfy \eqref{eq:lem-interf} for all $\varepsilon\in(0,\varepsilon_0)$.
From Lemma \ref{lem:interface} it follows that
\begin{equation}\label{interfaces-u0}
	d(I_K[u_0^\varepsilon], I[v])<\tfrac12\delta_1.
\end{equation}
Now, we apply the same reasoning to $u^\varepsilon(\cdot,t)$ for all $t\leq\min\{f(\e),\exp(A/\varepsilon)\}$.
Assumption \eqref{eq:lem-interf} is satisfied thanks to \eqref{limit} in Theorem \ref{main-scalar-Dir},
and because $E_\varepsilon[u^\varepsilon,\ub^\varepsilon_t](t)$ is a non-increasing function of $t$.
Then,
\begin{equation}\label{interfaces-u}
	d(I_K[u^\varepsilon(t)], I[v])<\tfrac12\delta_1
\end{equation}
for all $t\in(0,\min\{f(\e),\exp(A/\varepsilon)\})$. 
Combining \eqref{interfaces-u0} and \eqref{interfaces-u}, we obtain
\begin{equation*}
	d(I_K[u^\varepsilon(t)],I_K[u_0^\varepsilon])<\delta_1
\end{equation*}
for all $t\in(0,\min\{f(\e),\exp(A/\varepsilon)\})$ and the proof is complete.
\end{proof}
In particular, let us assume that the function $f(\e)\geq\exp(A/\e)$ and so, 
that the energy at $t=0$ is exponentially close to $Nc_0$, the minimum energy to have $N$ transitions between $-1$ and $+1$.
Thanks to Theorems \ref{main-scalar-Dir} and \ref{thm:interface}, we obtain exponentially slow motion for solutions of \eqref{CahnHill+ine} 
with boundary conditions \eqref{boundary-Dir} and appropriate initial data: 
the solutions maintain the same structure of the initial profile at least for an exponentially long time
and the transition points move with exponentially small velocity.

\subsection{Construction of $N$--transition layer structure functions}\label{sec:N-translayer}
Fix a piecewise constant function $v$ as in \eqref{vstruct} and assume without loss of generality $v=-1$ in $(a,h_1)$.
As we have already mentioned in Remark \ref{rem:ut}, the assumptions on the initial data \eqref{ass-u0} and \eqref{energy-ini} 
(or \eqref{energy-ini-Neu} in the case of Neumann boundary conditions) are equivalent to
\begin{equation}\label{eq:trans}
	\lim_{\varepsilon\rightarrow 0} \|u_0^\varepsilon-v\|_{{}_{L^1}}=0, \qquad 
	P_\varepsilon[u_0^\varepsilon]\leq Nc_0+\frac{1}{f(\e)},
\end{equation}
for the initial profile $u_0^\e$ and 
\begin{equation}\label{eq:ut-ub}
	\tau\|\ub_1^\varepsilon\|_{{}_{L^2}}^2\leq C\e\left[\frac{1}{f(\e)}+\exp(-A/\varepsilon)\right], \qquad 
	\left(\mbox{or } \,\tau\|\ut_1^\varepsilon\|_{{}_{L^2}}^2\leq C\e\left[\frac{1}{f(\e)}+\exp(-A/\varepsilon)\right]\right).
\end{equation}
The latter condition is automatically satisfied if 
\begin{equation*}
	\tau\|u_1^\varepsilon\|_{{}_{L^2}}^2\leq C\e\left[\frac{1}{f(\e)}+\exp(-A/\varepsilon)\right].
\end{equation*}
Let us construct an example of family of functions $u_0^\e$ satisfying \eqref{eq:trans}.
Define $u_0^\varepsilon:=v$ outside of $\bigcup_{i=1}^N (h_i-r,h_i+r)$ and in each interval $(h_i-r,h_i+r)$ 
we use particular stationary solutions of \eqref{CahnHill+ine}:
consider the unique solution to the boundary value problem 
\begin{equation*}
	-\varepsilon^2\Phi_\e''+W'(\Phi_\e)=0, \qquad
	\Phi_\e(0)=0, \qquad
	\Phi_\e(x)\rightarrow\pm1 \quad \mbox{ as } \quad x\rightarrow\pm\infty.
\end{equation*}
The existence of a unique solution is guaranteed by the assumption \eqref{eq:ass-W} on $W$.
We can rewrite the solution as $\Phi_\e(x)=\omega(x/\varepsilon)$, where $\omega$ satisfies
\begin{equation*}
	-\omega''+W'(\omega)=0, \qquad
	\omega(0)=0, \qquad
	\omega(x)\rightarrow\pm1 \quad \mbox{ as } \quad x\rightarrow\pm\infty,
\end{equation*}
or, equivalently
\begin{equation*}
	\omega'=\sqrt{2W(\omega)}, \qquad \quad \omega(0)=0. 
\end{equation*}
In the simplest example $W(u)=\frac14(u^2-1)^2$, we can compute explicitly $\omega$ and we have $\omega(x)=\tanh(x/\sqrt2)$.
For $i=1,\cdots, N$, define $u^\e_0$ in $(h_i-r,h_i+r)$ as 
\begin{equation*}
	u_0^\varepsilon(x):=\begin{cases}
	\vspace{.2cm}
	(-1)^i+\frac{x-h_i+r}{\varepsilon}\left\{\Phi_\e\bigl((-r+\e)(-1)^{i+1}\bigr)-(-1)^i\right\},&  x\in(h_i-r,h_i-r+\varepsilon),\\
	\vspace{.2cm}
	\Phi_\e\bigl((x-h_i)(-1)^{i+1}\bigr) & x\in[h_i-r+\varepsilon,h_i+r-\varepsilon]\\
	(-1)^{i+1}+\frac{h_i+r-x}{\varepsilon}\left\{\Phi_\e\bigl((r-\e)(-1)^{i+1}\bigr)-(-1)^{i+1}\right\}, &  x\in(h_i+r-\varepsilon,h_i+r). 
	\end{cases}
\end{equation*}
The function $u_0^\e\in H^1([a,b])$ satisfies both Neumann \eqref{boundary-Neu} and Dirichlet \eqref{boundary-Dir} boundary conditions.
Let us verify that $u_0^\e$ satisfies \eqref{eq:trans}.
It is easy to check the $L^1$ requirement;
therefore, we focus the attention on the energy requirement.
Consider an interval $[h_i-r,h_i+r]$ (for example $i$ odd); we have
\begin{equation*}
	\int_{h_i-r}^{h_i+r}\left[\frac\varepsilon2{(u_0^\e)}_x^2+\frac{W(u^\e_0)}\varepsilon\right]dx=I_1+I_2+I_3.
\end{equation*}
where 
\begin{align*}
	I_1&:=\int_{h_i-r}^{h_i-r+\varepsilon}\left[\frac\varepsilon2 {(u_0^\varepsilon)}_x^2+\frac{W(u_0^\varepsilon)}\varepsilon\right]dx,\\
	I_2&:=\int_{h_i-r+\varepsilon}^{h_i+r-\varepsilon}\left[\frac\varepsilon2 {(u_0^\varepsilon)}_x^2+\frac{W(u_0^\varepsilon)}\varepsilon\right]dx,\\
	I_3&:=\int_{h_i+r-\varepsilon}^{h_i+r}\left[\frac\varepsilon2 {(u_0^\varepsilon)}_x^2+\frac{W(u_0^\varepsilon)}\varepsilon\right]dx.		
\end{align*}
Let us estimate these three term. 
Concerning $I_2$, since $\frac{\varepsilon^2}2\left(\Phi_\e'\right)^2=W(\Phi_\e)$, we get
\begin{equation*}
	I_2=\int_{-r+\e}^{r-\e}\Phi_\e'(x)\sqrt{2W(\Phi_\e(x))}\,dx=\int_{\Phi_\e(-r+\varepsilon)}^{\Phi_\e(r-\varepsilon)}\sqrt{2W(s)}\,ds\leq c_0.  
\end{equation*}
For $I_3$, we have
\begin{equation*}
	I_3:=\int_{r-\e}^{r}\left[\frac1{2\varepsilon}\left(\Phi_\e(r-\e)-1\right)^2+\frac{1}{\varepsilon}W\Bigl(1+\frac{r-x}{\varepsilon}\left(\Phi_\e(r-\e)-1\right)\Bigr)\right]dx.
\end{equation*}
To estimate the latter term, for $\varepsilon$ sufficiently small, we use \eqref{W-quadratic} to obtain
\begin{equation*}
	W\Bigl(1+\frac{r-x}{\varepsilon}\left(\Phi_\e(r-\e)-1\right)\Bigr)\leq\Lambda(\Phi_\e(r-\e)-1)^2.
\end{equation*}
Hence, we infer
\begin{equation}\label{diseq:I1}
	I_3\leq C(\omega(r/\varepsilon-1)-1)^2.
\end{equation}
Here and in what follows, $C$ is a positive constant (independent on $\varepsilon$) whose value may change from line to line.
Since $\omega(x)\to 1$ as $x\to+\infty$, there exists $x_1>0$ sufficiently large so that
\begin{equation*}
	\omega'(x)\geq\sqrt{\frac\lambda2}(1-\omega(x)),
\end{equation*}
for all $x\geq x_1$, where $\lambda:=\min\{W''(\pm1)\}$.
Therefore,
\begin{equation}\label{w-a}
	1-\omega(x)\leq C\exp\left(-\sqrt{\lambda/2}\,x\right),
\end{equation}
for all $x\geq x_1$. 
If $\varepsilon$ is so small that $r/\varepsilon-1\geq x_1$, by substituting \eqref{w-a} into \eqref{diseq:I1}, we obtain
\begin{equation*}
	I_3\leq C\exp\left(-\sqrt{\lambda/2}(r/\varepsilon-1)\right)\leq C\exp(-A/\varepsilon),
\end{equation*}
for all positive constant $A\leq r\sqrt{2\lambda}$.
In a similar way, we can obtain the estimate for $I_1$.
Combining the estimates for $I_1$, $I_2$ and $I_3$ and summing up for $i=1,\dots, N$, we get 
$$P_\varepsilon[u_0^\varepsilon]\leq Nc_0+C\exp(-A/\varepsilon).$$
Observe that $u_0^\varepsilon\in W^{1,\infty}(a,b)$ and, since $P_\e$ is continuous on this space 
and $W^{1,\infty}(a,b)$ functions can be approximated arbitrarily closely by $C^k$ functions (for arbitrarily large $k$), 
we can construct an arbitrarily smooth function, equal to $v$ outside of $\bigcup_{i=1}^N (h_i-r,h_i+r)$,
which satisfies the condition \eqref{eq:trans}.

\section{The case of systems}\label{sec:sys}
The results of the previous section carry over to systems of \emph{hyperbolic Cahn--Hilliard equations}:
\begin{equation}\label{system-CH}
	\tau \bm{u}_{tt}+\bm{u}_t=(-\varepsilon^2\bm{u}_{xx}+\nabla W(\bm{u}))_{xx}, \qquad \quad x\in [a,b], \; t>0,
\end{equation}
where $\bm{u}(x,t)\in\R^m$ is a vector-valued function, $W:\R^m\rightarrow\R$ and $\varepsilon,\tau$ are positive parameters.
As in the scalar case, system \eqref{system-CH} is a hyperbolic variation of the system of Cahn--Hilliard equations 
\begin{equation}\label{Cahn-Morral}
	\bm{u}_t=(-\varepsilon^2\bm{u}_{xx}+\nabla W(\bm{u}))_{xx}. 
\end{equation}
As previously mentioned in the scalar case \eqref{CahnHill}, the Cahn--Hilliard equation was proposed to model phase separation in binary alloy;
if we consider $m+1$ components we obtain system \eqref{Cahn-Morral}, which describes the phase separation of a mixture with $m+1$ components.
The study of system \eqref{Cahn-Morral} was initiated in \cite{MorralCahn} and \cite{Defontaine1,Defontaine2}.
We also refer to \cite{Eyre} and references therein, where in particular  the differences between multicomponent and binary alloys are addressed.
Finally, we recall that the metastable properties of the solutions have been studied in \cite{Grant}.

Here, using the same approach of the previous section, we study the metastable properties of the solutions to \eqref{system-CH}.
For definiteness, we shall only refer to the case when \eqref{system-CH} is complemented with homogeneous Neumann boundary conditions
\begin{equation}\label{Neumann-sys}
	\bm{u}_x(a,t)=\bm{u}_x(b,t)=0, \qquad\quad \forall \,t>0.
\end{equation}
For the case of Dirichlet boundary conditions, which fixes two different zeros of $W$, the proofs can be adapted as for the scalar case discussed above.
Finally, we consider system \eqref{system-CH} subject to the initial data 
\begin{equation}\label{cond-iniz-sys}
	\bm{u}(x,0)=\bm{u}_0(x), \qquad \bm{u}_t(x,0)=\bm{u}_1(x), \qquad \qquad x\in[a,b].
\end{equation} 
We assume that $W\in C^3(\R^m,\R)$ is a positive function with global minimum equal to $0$ 
reached only at a finite number of points, namely there exist $K>1$ and $\bm{z}_1,\dots,\bm{z}_K\in\R^m$ such that
\begin{equation}\label{W1}
	W(\bm{u})\geq0 \quad \forall \, \bm{u}\in\R^m, \quad  \textrm{ and } \quad W(\bm{u})=0 \Longleftrightarrow \bm{u}\in\{\bm{z}_1,\dots,\bm{z}_K\}.
\end{equation}
Moreover, we assume that the Hessian $\nabla^2W$ is positive definite at each zero of $W$:
\begin{equation}\label{W2}
	\nabla^2W(\bm{z}_j)\bm{v}\cdot\bm{v}>0 \quad \textrm{ for all } \, j=1,\dots,K \, \textrm{ and }  \, \bm{v}\in\R^m\backslash\{\bm 0\}. 
\end{equation}
For $j=1,\dots,K$, denote by $\lambda_j$ the minimum of the eigenvalues of $\nabla^2 W(\bm{z}_j)$ and by $\lambda_0=\displaystyle\min_j\lambda_j$.

Similar to the scalar case, we define the energy functional
\begin{equation}\label{energy-sys}
	E_\varepsilon[\bm{u},\bm{\ut}_t](t):=\frac{\tau}{2\varepsilon}\|\bm{\ut}_t(\cdot,t)\|^2_{{}_{L^2}}+P_\varepsilon[\bm{u}](t),
\end{equation}
where 
\begin{equation*}
	\bm{\ut}(x,t):=\int_a^x \bm{u}(y,t)\,dy, \mbox{ and } \quad 
	P_\varepsilon[\bm{u}](t):=\int_a^b\left[\frac\varepsilon2 |\bm{u}_x(x,t)|^2+\frac{W(\bm{u}(x,t))}\varepsilon\right]dx.
\end{equation*}
If the initial velocity satisfies
\begin{equation}\label{ass-u1-sys}
	\int_a^b\bm{u}_1(x)\,dx=0,
\end{equation}
then the total mass is conserved:
\begin{equation*}
	\bm{M}(t):=\int_a^b \bm{u}(x,t)\,dx\equiv\int_a^b \bm{u}_0(x)\,dx,
\end{equation*}
and the energy functional is a non-increasing function of time $t$ along the solutions to \eqref{system-CH} with boundary conditions \eqref{Neumann-sys}:
\begin{equation}\label{eq:energy-der-sys}
	E_\e[\bm{u},\bm{\ut}_t](0)-E_\e[\bm{u},\bm{\ut}_t](T)=\e^{-1}\int_0^T\|\bm{\ut}_t(\cdot,t)\|^2_{{}_{L^2}}dt,
\end{equation}
for any $T>0$.

Fix $\bm{v}:[a,b]\rightarrow\{\bm{z}_1,\dots,\bm{z}_K\}$ having exactly $N$ jumps located at $a<h_1<h_2<\cdots<h_N<b$, and $r>0$ as in 
\eqref{vstruct}.
In the scalar case the minimum energy to have a transition between the two equilibrium points $-1$ and $+1$ is the constant $c_0$ introduced in \eqref{c_0}.
In the case of systems, from Young's inequality and the positivity of the term $\frac{\tau}{2\varepsilon}\|\bm{\ut}_t\|^2_{{}_{L^2}}$, it follows that
\begin{equation}\label{E-Young}
	E_\varepsilon[\bm{u},\bm{\ut}_t](t)\geq P_\varepsilon[\bm{u}](t)\geq\sqrt2\int_a^b\sqrt{W(\bm{u}(x,t))}|\bm{u}_x(x,t)|dx.
\end{equation}
This justifies the use of the modified energy \eqref{energy-sys}; 
indeed, the right hand side of inequality \eqref{E-Young} is strictly positive and does not depend on $\varepsilon$. 
For \eqref{E-Young}, we assign to the discontinuous function $\bm{v}$ the asymptotic energy
\begin{equation*}
	P_0[\bm{v}]:=\sum_{i=1}^N\phi(\bm{v}(h_i-r),\bm{v}(h_i+r)),
\end{equation*}
where
\begin{equation*}
\phi(\bm{\xi}_1,\bm{\xi}_2):=\inf\left\{J[\bm{z}]: \bm{z}\in \emph{AC}([a,b],\R^m), \bm{z}(a)=\bm{\xi}_1, \bm{z}(b)=\bm{\xi}_2\right\}
\end{equation*}
and
\begin{equation*}
	J[\bm{z}]:=\sqrt2\int_a^b\sqrt{W(\bm{z}(s))}|\bm{z}'(s)|ds.
\end{equation*}	
In other words, $\phi(\bm{\xi}_1,\bm{\xi}_2)$ represents the infimum of the line integral $\displaystyle{ \int_\gamma  \sqrt{2W} d\sigma}$ among all absolutely continuous curves connecting the two points $\bm{\xi}_1$ and $\bm{\xi}_2$  in $\R^m$.

The lower bound for the energy of  Proposition \ref{prop:lower} is extended to the present case just using the needed vector notation and definitions; 
for details see \cite{Grant,Folino2}.
\begin{prop}\label{prop:lower-sys}
Assume that $W:\R^m\rightarrow\R$ satisfies \eqref{W1}-\eqref{W2}.
Let $\bm{v}:[a,b]\rightarrow\{\bm{z}_1,\dots,\bm{z}_K\}$ as defined before and let $A$ be a positive constant less than $r\sqrt{2\lambda_0}$. 
Then, there exist constants $C,\delta>0$ (depending only on $W,\bm{v}$ and $A$) such that, 
for $\varepsilon$ sufficiently small, if $\|{\bm{\ut}-\bm{\tilde{v}}}\|_{L^1}\leq\delta$, then
\begin{equation*}\label{lower-sys}
	P_\varepsilon[\bm{u}]\geq P_0[\bm{v}]-C\exp(-A/\varepsilon).
\end{equation*}
\end{prop}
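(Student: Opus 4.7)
The plan is to adapt the scalar proof of Proposition \ref{prop:lower} step by step to the vector-valued setting, replacing absolute values with norms, using the matrix $\nabla^2W(\bm{z}_j)$ in place of $W''(\pm1)$, and interpreting the energy lower bound as a line integral in $\R^m$ rather than a one-dimensional path integral. Fix a transition point $h_i$, and let $\bm{z}_- := \bm{v}(h_i-r)$, $\bm{z}_+ := \bm{v}(h_i+r)$ (both minima of $W$). Pick $\hat r \in (0,r)$ and $\rho_1$ small enough that $A \leq (r-\hat r)\sqrt{2\lambda_0 - \nu\rho_1}$, where $\nu$ controls $\nabla^3 W$ in a neighborhood of the wells, and then choose $0<\rho_2<\rho_1$ so that the analog of \eqref{eq:forrho2} holds for the integrals $\int \sqrt{2W(\bm{\gamma}(s))}|\bm{\gamma}'(s)|\,ds$ along small geodesic-like arcs near $\bm{z}_\pm$.

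First I would locate two points: $x_i^\pm \in (h_i, h_i\pm\hat r)$ (up to orientation) at which $|\bm{u}(x_i^\pm) - \bm{z}_\pm| < \rho_2$. As in the scalar case, this comes from the duality identity $-\int(\bm{\ut}-\bm{\tilde v})\cdot\bm{w}'\,dx = \int(\bm{u}-\bm{v})\cdot\bm{w}\,dx$ for every $\bm{w}\in C^1_c([a,b],\R^m)$, combined with $\|\bm{\ut}-\bm{\tilde v}\|_{L^1}\leq\delta$ and a contradiction argument against a constant direction $\bm{w}$. Second, on the middle interval $[h_i-r_-,h_i+r_+]$, I would apply Young's inequality componentwise, exactly as in \eqref{E-Young}, to get
\begin{equation*}
\int_{h_i-r_-}^{h_i+r_+}\left[\frac{\e}{2}|\bm{u}_x|^2+\frac{W(\bm{u})}{\e}\right]dx \geq \sqrt{2}\int_{h_i-r_-}^{h_i+r_+}\sqrt{W(\bm{u})}|\bm{u}_x|\,dx \geq \phi(\bm{u}(x_i^-),\bm{u}(x_i^+)).
\end{equation*}
Continuity of $\phi$ near its arguments (a consequence of \eqref{W2} and the quadratic control of $W$ near each $\bm{z}_j$) then gives $\phi(\bm{u}(x_i^-),\bm{u}(x_i^+)) \geq \phi(\bm{z}_-,\bm{z}_+) - C\rho_2$, and in fact we can absorb the $C\rho_2$ loss into the tail estimate below.

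The heart of the proof, and the main obstacle, is the tail estimate on $[h_i+r_+,h_i+r]$ (and symmetrically on the other side). I would consider the constrained minimizer $\bm{z}$ of $\int[\tfrac{\e}{2}|\bm{z}_x|^2+\e^{-1}W(\bm{z})]\,dx$ with $\bm{z}(h_i+r_+)=\bm{u}(x_i^+)$; either its range leaves a ball of radius $\rho_1$ around $\bm{z}_+$, and the line-integral / Young inequality plus the choice \eqref{eq:forrho2} already yields the bound as in \eqref{E>fi}, or it stays in that ball, in which case it satisfies the Euler--Lagrange system $\bm{z}''=\e^{-2}\nabla W(\bm{z})$ with Neumann condition at $h_i+r$. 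The scalar trick of bounding $\psi:=(z-1)^2$ is replaced by $\psi(x):=|\bm{z}(x)-\bm{z}_+|^2$, whose second derivative is
\begin{equation*}
\psi''=2|\bm{z}'|^2+2\bm{z}''\cdot(\bm{z}-\bm{z}_+)\geq \frac{2}{\e^2}\,\nabla W(\bm{z})\cdot(\bm{z}-\bm{z}_+).
\end{equation*}
Taylor-expanding $\nabla W$ at $\bm{z}_+$ and using \eqref{W2} gives $\nabla W(\bm{z})\cdot(\bm{z}-\bm{z}_+)\geq \lambda_0|\bm{z}-\bm{z}_+|^2 - \tfrac{\nu}{2}|\bm{z}-\bm{z}_+|^3$, hence $\psi'' \geq (\mu^2/\e^2)\psi$ with $\mu = A/(r-\hat r)$, exactly as in the scalar case. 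A scalar maximum principle comparison then gives $\psi(h_i+r)\leq 2\rho_2^2\exp(-A/\e)$, and the quadratic upper bound $W(\bm{z}) \leq \Lambda|\bm{z}-\bm{z}_+|^2$ produces $|\int_{\bm{z}(h_i+r)}^{\bm{z}_+} \sqrt{2W}\,d\sigma| \leq C\rho_2^2 \exp(-A/\e)$ along any path, yielding the tail estimate analogous to \eqref{E>fi-exp}. Summing the middle and tail contributions over $i=1,\dots,N$ completes the lower bound $P_\e[\bm{u}]\geq \sum_i \phi(\bm{v}(h_i-r),\bm{v}(h_i+r)) - C\exp(-A/\e) = P_0[\bm{v}] - C\exp(-A/\e)$.
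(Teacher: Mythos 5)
Your overall strategy---adapting the scalar proof of Proposition \ref{prop:lower} with $\psi(x)=|\bm{z}(x)-\bm{z}_+|^2$ in the tail estimate, the Taylor expansion of $\nabla W$ at the wells, and the path functional $\phi$ with its triangle inequality replacing $\int\sqrt{2W}$---is exactly the route the paper intends (the paper itself only invokes ``vector notation'' and defers to \cite{Grant,Folino2}), and the Euler--Lagrange/comparison-principle step and the bookkeeping that recovers the $C\rho_2$ loss from the tails are correct. The gap is in your very first step, the location of the points where $|\bm{u}-\bm{z}_\pm|<\rho_2$, i.e.\ the analogue of \eqref{2points}. In the scalar proof this rests on the intermediate value theorem: $|u-1|\ge\rho_2$ on an interval forces $u-1$ to keep a constant sign there, so testing against a nonnegative $w$ yields $\rho_2\|w\|_{L^1}\le\delta\|w'\|_\infty$ and a contradiction. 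For $m\ge2$ the set $\{\bm{y}:|\bm{y}-\bm{z}_+|\ge\rho_2\}$ is connected, and no constant direction $\bm{e}$ makes $(\bm{u}-\bm{z}_+)\cdot\bm{e}$ single-signed: taking $m=2$, $\bm{z}_+=\bm{0}$ and $\bm{u}(x)=\rho_2(\cos kx,\sin kx)$ on $(h_i,h_i+\hat r)$ gives $|\bm{u}-\bm{z}_+|\equiv\rho_2$ while $\|\bm{\ut}-\bm{\tilde{v}}\|_{{}_{L^1}}=O(1/k)\le\delta$, so the hypothesis on the primitive alone cannot produce the desired points. (The proposition still holds for such a $\bm{u}$, but only because its energy blows up like $1/\e$.)

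The missing idea, which is the one genuinely new ingredient of the vector case, is to note first that one may assume $P_\e[\bm{u}]\le P_0[\bm{v}]$ without loss of generality, since otherwise there is nothing to prove. This a priori upper bound forces the set where $\bm{u}$ stays at distance at least $\rho_2$ from every zero of $W$ to have small measure (of order $\e$), and it bounds the number of passages between $\rho_2$-neighbourhoods of distinct wells, each of which costs at least $\min_{j\ne k}\phi(\bm{z}_j,\bm{z}_k)>0$. Only once $\bm{u}$ is known to lie near \emph{some} well on most of $(h_i,h_i+\hat r)$ does your duality identity, tested against $\bm{w}=w(x)\,\bm{e}$ with $\bm{e}=(\bm{z}_j-\bm{z}_+)/|\bm{z}_j-\bm{z}_+|$, rule out the wrong wells $\bm{z}_j\ne\bm{z}_+$ and produce the points of \eqref{2points}; this is how \cite{Grant,Folino2} proceed. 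With this step supplied, the remainder of your argument goes through as written.
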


Regarding the initial data, as in \eqref{ass-u0} and \eqref{energy-ini-Neu}, we assume 
\begin{equation}\label{L1}
	\lim_{\varepsilon\rightarrow 0} \|\bm{u}_0^\varepsilon-\bm{v}\|_{{}_{L^1}}=0,
\end{equation} 
and that  at  $t=0$ the energy \eqref{energy-sys} satisfies
\begin{equation}\label{energy0}
	E_\varepsilon[\bm{u}_0^\varepsilon, \bm{\ut}_1^\varepsilon]\leq P_0[\bm{v}]+\frac1{f(\varepsilon)},
\end{equation}
for all $\varepsilon\in(0,\varepsilon_0)$ and 
for some function $f:(0,+\infty)\rightarrow(0,+\infty)$.
Using the equality \eqref{eq:energy-der-sys}, Proposition \ref{prop:lower-sys} and the same arguments of the proof of Theorem \ref{main-scalar-Dir},
we can prove our main result in the case of systems.
\begin{thm}\label{main-sys}
Assume that $W$ satisfies \eqref{W1}-\eqref{W2}. 
Let $\bm{v}$ as defined before and $A\in(0,r\sqrt{2\lambda_0})$. If  $\bm{u}^\varepsilon$ is solution of \eqref{system-CH}-\eqref{Neumann-sys}-\eqref{cond-iniz-sys} 
with initial data $\bm{u}_0^{\varepsilon}$, $\bm{u}_1^{\varepsilon}$ satisfying \eqref{ass-u1-sys}, \eqref{L1} and \eqref{energy0},
then, 
\begin{equation*}\label{limitNeu-sys}
	\sup_{0\leq t\leq  T_\e}\|\bm{\ut}^\varepsilon(\cdot,t)-\bm{\tilde v}\|_{{}_{L^1}}\xrightarrow[\varepsilon\rightarrow0]{}0,
\end{equation*}
where $T_\varepsilon:=\min\{f(\e),\exp(A/\varepsilon)\}$.
Moreover, for any $0<\eta<1$,
\begin{equation*}\label{limit-sys}
	\sup_{0\leq t\leq \e^\eta T_\e}\|\bm{u}^\varepsilon(\cdot,t)-\bm{v}\|_{{}_{L^1}}\xrightarrow[\varepsilon\rightarrow0]{}0.
\end{equation*}
\end{thm}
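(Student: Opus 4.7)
The plan is to mirror, in the vector-valued setting, the proof of Theorem \ref{main-scalar-Dir}, replacing the primitive $\bar u$ by the vector primitive $\bm{\ut}$ and the threshold $Nc_0$ by $P_0[\bm{v}]$. The two essential ingredients are already available: the energy dissipation identity \eqref{eq:energy-der-sys} (which rests on the zero-mean condition \eqref{ass-u1-sys}, playing for Neumann boundary conditions the role that vanishing boundary contributions played in Lemma \ref{lem:diss-Dir}) and the exponentially sharp lower bound provided by Proposition \ref{prop:lower-sys}.

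The first step is to establish the system analog of Proposition \ref{prop:L2-norm}, namely
\begin{equation*}
\int_0^{C_1 \varepsilon^{-1} T_\varepsilon} \|\bm{\ut}_t^\varepsilon(\cdot,t)\|_{L^2}^2\, dt \leq C_2 \varepsilon \left[\frac{1}{f(\varepsilon)} + \exp(-A/\varepsilon)\right],
\end{equation*}
by a continuity-bootstrap argument. Using the vector version of \eqref{eq:gbarandg} together with \eqref{L1}, I first ensure $\|\bm{\ut}_0^\varepsilon - \bm{\tilde v}\|_{L^1}\leq \delta/2$ for $\varepsilon$ small. Define $\hat T$ as the first time at which $\int_0^{\hat T}\|\bm{\ut}_t^\varepsilon\|_{L^1}\, ds = \delta/2$; on $[0,\hat T]$, the triangle inequality gives $\|\bm{\ut}^\varepsilon(\cdot,\hat T) - \bm{\tilde v}\|_{L^1}\leq \delta$, so Proposition \ref{prop:lower-sys} yields $E_\varepsilon[\bm{u}^\varepsilon,\bm{\ut}_t^\varepsilon](\hat T)\geq P_0[\bm{v}] - C\exp(-A/\varepsilon)$. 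Integrating \eqref{eq:energy-der-sys} and inserting \eqref{energy0} produces the displayed $L^2$ estimate on $[0,\hat T]$; Hölder in time then translates it into a bound on the $L^1$-in-time norm of $\|\bm{\ut}_t^\varepsilon\|_{L^1}$, which forces $\hat T\geq C_1\varepsilon^{-1}T_\varepsilon$.

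With this bound in hand, the first convergence statement follows from
\begin{equation*}
\|\bm{\ut}^\varepsilon(\cdot,t) - \bm{\tilde v}\|_{L^1} \leq \|\bm{\ut}^\varepsilon(\cdot,t) - \bm{\ut}_0^\varepsilon\|_{L^1} + \|\bm{\ut}_0^\varepsilon - \bm{\tilde v}\|_{L^1},
\end{equation*}
where the second term vanishes by \eqref{L1} and the componentwise \eqref{eq:gbarandg}, while the first is controlled by Jensen's inequality and the $L^2$ bound from Step 1:
\begin{equation*}
\|\bm{\ut}^\varepsilon(\cdot,t) - \bm{\ut}_0^\varepsilon\|_{L^1}^2 \leq (b-a)\, t \int_0^t \|\bm{\ut}_s^\varepsilon\|_{L^2}^2\, ds \leq 2C_2(b-a)\,\varepsilon,
\end{equation*}
uniformly in $t\in[0,T_\varepsilon]$. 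For the stronger statement up to $\varepsilon^\eta T_\varepsilon$, I introduce $\bm{w}^\varepsilon:=\bm{u}^\varepsilon - \bm{u}_0^\varepsilon$ and $\bm{\bar w}^\varepsilon:=\bm{\ut}^\varepsilon - \bm{\ut}_0^\varepsilon$, noting $\bm{\bar w}_x^\varepsilon=\bm{w}^\varepsilon$. Integration by parts, using \eqref{Neumann-sys}, gives $\|\bm{w}^\varepsilon(\cdot,t)\|_{L^2}^2 \leq \|\bm{\bar w}^\varepsilon(\cdot,t)\|_{L^2}\,\|\bm{w}_x^\varepsilon(\cdot,t)\|_{L^2}$. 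The monotonicity of $E_\varepsilon$ together with \eqref{energy0} yields $\|\bm{u}_x^\varepsilon\|_{L^2}^2 \leq C/\varepsilon$, while restricting the previous estimate to $t\leq \varepsilon^\eta T_\varepsilon$ gives $\|\bm{\bar w}^\varepsilon(\cdot,t)\|_{L^2}^2 \leq C\,\varepsilon^{1+\eta}$, whence $\|\bm{u}^\varepsilon(\cdot,t)-\bm{u}_0^\varepsilon\|_{L^1}\leq C\varepsilon^{\eta/4}$ by a further Cauchy--Schwarz in space.

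The main potential obstacle is conceptual rather than computational: verifying that the bootstrap hypothesis in Step 1 can indeed be closed \emph{uniformly} on a window of length $C_1\varepsilon^{-1}T_\varepsilon$, which requires carefully tracking the constants in the lower bound of Proposition \ref{prop:lower-sys} and ensuring that the tolerance $\delta$ depends only on $W$, $\bm{v}$ and $A$, not on $\varepsilon$. Once this is done, the vector nature of the problem enters only through the use of $|\cdot|$ in place of absolute value and through the replacement of $c_0$ with the geodesic-type functional $P_0[\bm{v}]$, and the scalar proof transfers without any further structural change.
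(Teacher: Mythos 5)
Your proposal is correct and follows exactly the route the paper intends: the paper itself proves Theorem \ref{main-sys} only by remarking that one combines the dissipation identity \eqref{eq:energy-der-sys}, Proposition \ref{prop:lower-sys} and ``the same arguments of the proof of Theorem \ref{main-scalar-Dir}'', and your write-up is precisely that adaptation (vector analog of Proposition \ref{prop:L2-norm} via the bootstrap on $\int_0^{\hat T}\|\bm{\ut}_t^\varepsilon\|_{L^1}\,dt$, then the two triangle-inequality/interpolation steps). The only cosmetic point is that the vanishing of the boundary term in your integration by parts at $x=b$ rests on mass conservation, i.e.\ on \eqref{Neumann-sys} together with \eqref{ass-u1-sys}, rather than on \eqref{Neumann-sys} alone.
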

Thanks to Theorem \ref{main-sys}, we can estimate the velocity of the transition points $h_1,\dots,h_N$ using the standard procedure of Section \ref{sec:layer}.
Let us introduce the definition of \emph{interface} in the case of vector valued function.
If $\bm{v}:[a,b]\to\R^m$ is a step function with jumps at $h_1,h_2,\ldots,h_N$, then its {\it interface} $I[\bm{v}]$ is defined by 
\begin{equation*}
	I[\bm{v}]:=\{h_1,h_2,\ldots,h_N\}.
\end{equation*}
For an arbitrary function $\bm{u}:[a,b]\rightarrow\mathbb{R}^m$ and an arbitrary closed subset $K\subset\R^m\backslash W^{-1}(\{0\})$,
the {\it interface} $I_K[\bm{u}]$ is defined by
\begin{equation*}
	I_K[\bm{u}]:=\bm{u}^{-1}(K).
\end{equation*}

\begin{thm}\label{thm:interface-sys}
Assume that $W$ satisfies \eqref{W1}-\eqref{W2}. 
Let $\bm{u}^\varepsilon$ be solution of \eqref{system-CH}-\eqref{Neumann-sys}-\eqref{cond-iniz-sys} 
with initial data $\bm{u}_0^{\varepsilon}$, $\bm{u}_1^{\varepsilon}$ satisfying \eqref{ass-u1-sys}, \eqref{L1} and \eqref{energy0}.
Given $\delta_1\in(0,r)$ and a closed subset $K\subset\R^m\backslash W^{-1}(\{0\})$, set
\begin{equation*}
	t_\varepsilon(\delta_1)=\inf\{t:\; d(I_K[\bm u^\varepsilon(\cdot,t)],I_K[\bm{u}_0^\varepsilon])>\delta_1\}.
\end{equation*}
There exists $\varepsilon_0>0$ such that if $\varepsilon\in(0,\varepsilon_0)$ then
\begin{equation*}\label{T-interface}
	t_\varepsilon(\delta_1)> \min\{f(\e),\exp(A/\varepsilon)\}.
\end{equation*}
\end{thm}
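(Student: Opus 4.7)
The plan is to mimic the scalar argument used for Theorem \ref{thm:interface}, replacing the scalar interface lemma by a vector-valued counterpart tailored to the potential $W\colon\R^m\to\R$ with zero set $\{\bm z_1,\dots,\bm z_K\}$. The energy functional $E_\varepsilon[\bm u^\varepsilon,\bm{\ut}_t^\varepsilon](t)$ is non-increasing along solutions by \eqref{eq:energy-der-sys}, and Theorem \ref{main-sys} gives that $\|\bm{\ut}^\varepsilon(\cdot,t)-\bm{\tilde v}\|_{L^1}\to 0$ uniformly for $t\in[0,T_\varepsilon]$. Thus, provided we have a lemma controlling the interface $I_K[\bm u]$ from $L^1$-closeness of $\bm{\ut}$ to $\bm{\tilde v}$ plus an upper bound on $P_\varepsilon[\bm u]$ slightly above $P_0[\bm v]$, applying it at $t=0$ and at each $t\in(0,T_\varepsilon)$ and invoking the triangle inequality for the Hausdorff distance will yield the conclusion.

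First, I would establish the vector-valued analog of Lemma \ref{lem:interface}: there exist $\hat\delta,\varepsilon_0,\Gamma>0$ such that whenever $\|\bm{\ut}-\bm{\tilde v}\|_{L^1}<\hat\delta$ and $P_\varepsilon[\bm u]\leq P_0[\bm v]+\Gamma$ for $\varepsilon\in(0,\varepsilon_0)$, one has $d(I_K[\bm u],I[\bm v])<\tfrac12\delta_1$. The construction follows the pattern of the scalar proof: pick $\rho>0$ so small that the $\rho$-neighborhoods $B_\rho(\bm z_j)$ are disjoint from $K$ and so that
\begin{equation*}
\inf\bigl\{\phi(\bm\xi_1,\bm\xi_2):\bm\xi_1\in K,\ \bm\xi_2\in\textstyle\bigcup_j B_\rho(\bm z_j)\bigr\}>2\Gamma,
\end{equation*}
where $\phi$ is the geodesic cost introduced in the excerpt and $\Gamma$ is chosen comparable to the maximal ``cap'' cost $\phi(\bm z_j,\bm\xi)$ for $\bm\xi\in\partial B_\rho(\bm z_j)$. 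The test-function argument used to prove \eqref{2points} in Proposition \ref{prop:lower} carries over verbatim componentwise and produces, for each transition $h_i$, points $x_i^\pm\in(h_i-\delta_1/2,h_i+\delta_1/2)$ at which $\bm u$ lies in $B_\rho(\bm v(h_i\pm r))$. If \eqref{lem:d-interfaces} were violated, then on some interval $(x_i^-,x_i^+)$ the function $\bm u$ would additionally pass through $K$, so by the line-integral lower bound \eqref{E-Young} one gets
\begin{equation*}
P_\varepsilon[\bm u]\ \geq\ \sum_{i=1}^N\phi(\bm u(x_i^-),\bm u(x_i^+))+\inf\bigl\{\phi(\bm\xi_1,\bm\xi_2):\bm\xi_1\in K,\bm\xi_2\in\textstyle\bigcup_j B_\rho(\bm z_j)\bigr\},
\end{equation*}
which, after estimating each $\phi(\bm u(x_i^-),\bm u(x_i^+))\geq\phi(\bm v(h_i-r),\bm v(h_i+r))-\Gamma/N$ via continuity of $\phi$ near the minima $\bm z_j$, contradicts the hypothesis $P_\varepsilon[\bm u]\leq P_0[\bm v]+\Gamma$.

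Once this lemma is in hand, the theorem is immediate. Using \eqref{L1}, \eqref{energy0}, the vector version of \eqref{eq:gbarandg}, and \eqref{eq:energy-der-sys}, I would choose $\varepsilon_0$ so small that $\bm u_0^\varepsilon$ satisfies both hypotheses of the lemma for all $\varepsilon\in(0,\varepsilon_0)$, whence
\begin{equation*}
d(I_K[\bm u_0^\varepsilon],I[\bm v])<\tfrac12\delta_1.
\end{equation*}
For $t\in(0,T_\varepsilon)$, Theorem \ref{main-sys} yields $\|\bm{\ut}^\varepsilon(\cdot,t)-\bm{\tilde v}\|_{L^1}<\hat\delta$ uniformly in $t$, while the monotonicity from \eqref{eq:energy-der-sys} gives $P_\varepsilon[\bm u^\varepsilon(\cdot,t)]\leq E_\varepsilon[\bm u^\varepsilon,\bm{\ut}_t^\varepsilon](t)\leq E_\varepsilon[\bm u_0^\varepsilon,\bm{\ut}_1^\varepsilon]\leq P_0[\bm v]+\Gamma$ for $\varepsilon$ sufficiently small. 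Applying the lemma again gives $d(I_K[\bm u^\varepsilon(\cdot,t)],I[\bm v])<\tfrac12\delta_1$, and the triangle inequality for the Hausdorff distance produces $d(I_K[\bm u^\varepsilon(\cdot,t)],I_K[\bm u_0^\varepsilon])<\delta_1$ for all $t\in(0,T_\varepsilon)$, which is precisely the assertion $t_\varepsilon(\delta_1)>T_\varepsilon$.

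The main obstacle is the vector-valued interface lemma: in the scalar case there is only one ``minimal transition'' $-1\leftrightarrow +1$ with cost $c_0$, whereas here $\bm v$ may jump between any of the $K$ wells and $\phi$ is merely an infimum over admissible paths rather than an explicit integral, so the scalar estimates via \eqref{eq:Young} have to be replaced by arguments based on \eqref{E-Young} and continuity properties of $\phi$ near each $\bm z_j$. Once this variational ingredient is secured (along the lines of \cite{Grant,Folino2}), the dynamical part of the proof is entirely parallel to Theorem \ref{thm:interface}.
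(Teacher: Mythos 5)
Your proposal is correct and follows essentially the same route as the paper: a vector-valued interface lemma proved by choosing $\rho$ so that the geodesic cost $\phi$ from $K$ to the wells dominates the "cap" costs, locating points $x_i^\pm$ via the test-function argument of Proposition \ref{prop:lower}, and deriving a contradiction with the energy upper bound, followed by applying the lemma at $t=0$ and at each $t\in(0,T_\e)$ using Theorem \ref{main-sys} and the monotonicity \eqref{eq:energy-der-sys}. Your choice of $\Gamma$ and the triangle-inequality estimate for $\phi$ match the paper's $\Gamma=2N\sup\phi(\bm z_j,\bm\xi)$ and its bound $\inf>4N\sup$, so the two arguments coincide.
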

For the sake of completeness, we report here below a brief description of the arguments needed to prove this result in the present case.
\begin{proof}[Sketch of the proof of Theorem \ref{thm:interface-sys}.]
Fix $\delta_1\in(0,r)$ and a closed subset $K\subset\R^m\backslash W^{-1}(\{0\})$.
In the following, we will denote by $B(\bm{z},\rho)$ the ball of center $\bm{z}$ and of radius $\rho$.
We claim that there exists $\hat\delta>0$ such that for any function $\bm{u}:[a,b]\to\R^m$ satisfying
\begin{equation}\label{eq:tilde}
	\|\bm{\ut}-\bm{\tilde{v}}\|_{{}_{L^1}}<\hat\delta
\end{equation}
and 
\begin{equation}\label{eq:P[u]}
	P_\varepsilon[\bm{u}]\leq P_0[\bm{v}]+ 2N\sup\{\phi(\bm{z}_j,\bm{\xi}) : \bm{z}_j\in W^{-1}(\{0\}), \bm{\xi}\in B(\bm{z}_j,\rho)\},
\end{equation}
for any $\varepsilon\in(0,\varepsilon_0)$, we have
\begin{equation}\label{interfaces-sys}
	d(I_K[\bm{u}], I[\bm{v}])<\tfrac12\delta_1.
\end{equation}
In order to verify the claim, choose $\rho>0$ small enough that
\begin{align*}
	\inf\{ & \phi(\bm{\xi}_1,\bm{\xi}_2) : \bm{z}_j\in W^{-1}(\{0\}), \bm{\xi}_1\in K, \bm{\xi}_2\in B(\bm{z}_j,\rho)\}\\
	&>4N\sup\{\phi(\bm{z}_j,\bm{\xi}_2) : \bm{z}_j\in W^{-1}(\{0\}), \bm{\xi}_2\in B(\bm{z}_j,\rho)\}.
\end{align*}
By using the same arguments of the proof of \eqref{2points} in Proposition \ref{prop:lower}, we can prove that for each $i$ there exist
\begin{equation*}
	x^-_{i}\in(h_i-\delta_1/2,h_i) \qquad \textrm{and} \qquad x^+_{i}\in(\gamma_i,\gamma_i+\delta_1/2)
\end{equation*}
such that
\begin{equation*}
	|\bm{u}(x^-_{i})-\bm{v}(x^-_{i})|<\rho \qquad \textrm{and} \qquad |\bm{u}(x^+_{i})-\bm{v}(x^+_{i})|<\rho.
\end{equation*}
If \eqref{interfaces-sys} is violated, then
\begin{align}
	P_\varepsilon[\bm{u}]&\geq\sum_{i=1}^N \int_{x_i^-}^{x_i^+}\left[\frac\varepsilon2 |\bm{u}_x(x,t)|^2+\frac{W(\bm{u}(x,t))}\varepsilon\right]dx\notag\\
	&+\inf\{\phi(\bm{\xi}_1,\bm{\xi}_2) : \bm{z}_j\in W^{-1}(\{0\}), \bm{\xi}_1\in K, \bm{\xi}_2\in B(\bm{z}_j,\rho)\}. \label{diseq:E1-sys}
\end{align}
On the other hand, triangle inequality gives 
\begin{equation*}
	\phi\bigl(\bm{v}(x^+_{i}),\bm{v}(x^-_{i})\bigr)\leq\phi\bigl(\bm{v}(x^+_{i}),\bm{u}(x^+_{i})\bigr)
	+\phi\bigl(\bm{u}(x^+_{i}),\bm{u}(x^-_{i})\bigr)+\phi\bigl(\bm{u}(x^-_{i}),\bm{v}(x^-_{i})\bigr)
\end{equation*}
and as a consequence
\begin{align*}
	\phi\bigl(\bm{u}(x^-_{i}),\bm{u}(x^+_{i})\bigr)\geq &\, \phi\bigl(\bm{v}(x^+_{i}),\bm{v}(x^-_{i})\bigr)\\
	& -2\sup\{\phi(\bm{z}_j,\bm{\xi}_2) : \bm{z}_j\in W^{-1}(\{0\}), \bm{\xi}_2\in B(\bm{z}_j,\rho)\}. 
\end{align*}
Substituting the latter bound in \eqref{diseq:E1-sys} and using Young's inequality, we infer
\begin{align*}
	P_\varepsilon[\bm{u}]\geq P_0[\bm{v}]&-2N\sup\{\phi(\bm{z}_j,\bm{\xi}_2) : \bm{z}_j\in W^{-1}(\{0\}), \bm{\xi}_2\in B(\bm{z}_j,\rho)\}\\
	&+\inf\{\phi(\bm{\xi}_1,\bm{\xi}_2) : \bm{z}_j\in W^{-1}(\{0\}), \bm{\xi}_1\in K, \bm{\xi}_2\in B(\bm{z}_j,\rho)\}.
\end{align*}
For the choice of $\rho$ and \eqref{eq:P[u]}, we obtain
\begin{align*}
	P_\varepsilon[\bm{u}]>P_0[\bm{v}]+2N\sup\{\phi(\bm{z}_j,\bm{\xi}_2) : \bm{z}_j\in W^{-1}(\{0\}), \bm{\xi}_2\in B(\bm{z}_j,\rho)\}
	\geq P_\varepsilon[\bm{u}],
\end{align*}
which is a contradiction. 
Hence, the claim \eqref{interfaces-sys} is true.

Now, we conclude the proof of Theorem \ref{thm:interface-sys} by applying \eqref{interfaces-sys} to the initial datum $\bm{u}_0^\e$
and to the solution $\bm{u}^\e(\cdot,t)$ for any $t\in(0,\min\{f(\e),\exp(A/\varepsilon)\})$.
The assumptions on the initial data \eqref{L1}, \eqref{energy0} imply that $\bm{u}_0^\varepsilon$ satisfy \eqref{eq:tilde} and \eqref{eq:P[u]} 
if $\e$ is sufficiently small.
Then, we have
\begin{equation}\label{interfaces-u0-sys}
	d(I_K[\bm{u}_0^\varepsilon], I[\bm{v}])<\tfrac12\delta_1.
\end{equation}
From Theorem \ref{main-sys}, it follows that the solution $\bm{u}^\varepsilon(\cdot,t)$ satisfies 
the condition \eqref{eq:tilde} for any $t\in(0,\min\{f(\e),\exp(A/\varepsilon)\})$,
while \eqref{eq:P[u]} holds because $E_\varepsilon[\bm{u}^\varepsilon,\bm{\ut}^\varepsilon_t](t)$ is a nonincreasing function of $t$
along the solutions of \eqref{system-CH} with boundary conditions \eqref{Neumann-sys} and initial datum $\bm{u}_1$ satisfying \eqref{ass-u1-sys}.
Therefore,
\begin{equation}\label{interfaces-u-sys}
	d(I_K[\bm{u}^\varepsilon(t)], I[\bm{v}])<\tfrac12\delta_1,
\end{equation}
for any $t\in(0,\min\{f(\e),\exp(A/\varepsilon)\})$.
Combining \eqref{interfaces-u0-sys} and \eqref{interfaces-u-sys}, we obtain
\begin{equation*}
	d(I_K[\bm{u}^\varepsilon(t)],I_K[\bm{u}_0^\varepsilon])<\delta_1
\end{equation*}
for any $t\in(0,\min\{f(\e),\exp(A/\varepsilon)\})$ and the proof is complete.
\end{proof}

To conclude our studies, we point out that the construction of an initial profile with $N$--transitions layer structure of Section \ref{sec:N-translayer} can be extended in the present vectorial case without significant changes; for details, we refer to \cite{Grant,Folino2}.

\section*{\bf Conflict of interest statement}
The authors have no conflict of interest to declare.

\end{document}